\documentclass{article}%

\usepackage[T1]{fontenc}
\usepackage{dirtytalk}
\usepackage[breaklinks=true, colorlinks=true, linkcolor={blue!90!black}, citecolor={blue!90!black}, urlcolor={blue!90!black}]{hyperref}
\usepackage{comment}
\usepackage{url}
\usepackage{amsmath,amssymb,mathtools,doi}
\usepackage[linesnumbered,vlined,algoruled]{algorithm2e}
\usepackage{cleveref}

\DeclareMathOperator\sign{sign}

\newcommand{\Reals}{\mathbb{R}} %

\newcommand{\eps}{\varepsilon} 			%
\newcommand{\abs}[1]{\left\vert #1 \right\vert} %

\newcommand{\zerob}{\mathbf{0}} %
\newcommand{\oneb}{\mathbf{1}} %
\usepackage{bm} %

\newcommand{\Phib}{\bm{\Phi}}
\newcommand{\pib}{\bm{\pi}}

\newcommand{\etab}{\bm{\eta}}
\newcommand{\Omegab}{\bm{\Omega}}
\newcommand{\Gammab}{\bm{\Gamma}}
\newcommand{\sigmab}{\bm{\sigma}}
\newcommand{\ab}{\bm{a}}
\newcommand{\Ab}{\bm{A}}

\newcommand{\db}{\bm{d}}

\newcommand{\eb}{\bm{e}}

\newcommand{\lb}{\bm{l}}

\newcommand{\Mb}{\bm{M}}

\newcommand{\Rb}{\bm{R}} %

\newcommand{\Sb}{\bm{S}}
\newcommand{\Tb}{\bm{T}}
\newcommand{\ub}{\bm{u}}

\newcommand{\vb}{\bm{v}}

\newcommand{\xb}{\bm{x}}

\newcommand{\yb}{\bm{y}}

\newcommand{\zb}{\bm{z}}

\newcommand{\lambdab}{\bm{\lambda}}

\newcommand{\mub}{\bm{\mu}}
\usepackage{wrapfig}

\newcommand{\cP}{\mathcal{P}} %

\usepackage{xcolor}
\newenvironment{proof}{\paragraph{Proof:}}{\hfill$\square$}
\usepackage[maxnames=5,style=numeric-comp,firstinits=true,sorting=none,backend=bibtex]{biblatex}
\renewbibmacro{in:}{}
\addbibresource{smw-bigrefs.bib}
\usepackage[margin=1in]{geometry}
\usepackage{authblk}
\providecommand{\keywords}[1]{\noindent\textbf{ Keywords:} #1}

\newcommand{\boundary}{\operatorname{Bndry}}
\newcommand{\interior}{\operatorname{Int}}
\newcommand{\diag}{\operatorname{diag}}

\newtheorem{lemma}{Lemma}

\newcommand{\ft}{\widetilde{f}}

\definecolor{ForestGreen}{RGB}{34,139,34}

\newtheorem{assumption}{Assumption}

\newtheorem{theorem}{Theorem}%
\newtheorem{remark}{Remark}%

\newtheorem{definition}{Definition}%
\newtheorem{property}{Property}

\raggedbottom

\makeatletter
\def\namedlabel#1#2{\begingroup
    #2%
    \def\@currentlabel{#2}%
    \phantomsection\label{#1}\endgroup
}
\makeatother

\begin{document}

\title{Modeling Approaches for Addressing Simple Unrelaxable Constraints with %
Unconstrained Optimization Methods}

\author[1]{Jeffrey Larson}%

\author[1,2]{Misha Padidar}%

\author[1]{Stefan M. Wild}%

\affil[1]{Mathematics and Computer Science Division, Argonne National Laboratory \linebreak[4] \texttt{\small jmlarson@anl.gov}; \texttt{\small wild@anl.gov}}

\affil[2]{Center for Applied Mathematics, Cornell University\linebreak[4] \texttt{\small map454@cornell.edu}}

\maketitle

\abstract{
We explore novel approaches for solving nonlinear optimization problems with unrelaxable bound constraints, which must be satisfied before the objective function can be evaluated. Our method reformulates the unrelaxable bound-constrained problem as an unconstrained optimization problem that is amenable to existing unconstrained optimization methods. The reformulation relies on a domain warping to form a merit function; the choice of the warping determines the level of exactness with which the unconstrained problem can be used to find solutions to the bound-constrained problem, as well as key properties of the unconstrained formulation such as smoothness. We develop theory when the domain warping is a multioutput sigmoidal warping, and we explore the practical elements of applying unconstrained optimization methods to the formulation. We develop an algorithm that exploits the structure of the sigmoidal warping to guarantee that unconstrained optimization algorithms applied to the merit function will find a stationary point to the desired tolerance.}

\keywords{unrelaxable constraints, merit function, constrained optimization, optimization}

\section{Introduction}
\label{sec:intro}
This paper addresses nonlinear bound-constrained optimization problems 
\begin{equation}
\underset{\yb\in \Omegab =[\lb,\ub]}{\min} 
\; 
f(\yb), 
\tag{PROB}
\label{eq:PROB}
\end{equation}
where $f$ is a differentiable scalar-valued objective function and $\yb$
is an $n$-dimensional vector of decision variables. The decision space (or ``feasible region'') $\Omegab$ is a key factor here because we assume
that the constraints defining $\Omegab$ are unrelaxable~\cite{taxonomy15} and therefore the objective function $f$ cannot be evaluated at points outside $\Omegab$. Such constraints arise in settings including those where numerical simulations, differentiable algebraic equations, and other complex systems are known to not produce meaningful output when certain unrelaxable constraints are violated. For example, negative concentration levels in a chemical system and negative probabilities of transmission in an epidemiological simulation are readily modeled as belonging to regions that an optimization algorithm should never probe.  Many approaches for constrained optimization (e.g., penalty and filter methods) do not natively support such unrelaxable constraints, making always-feasible algorithms (beyond interior-point and projection-based approaches) an active area of research~\cite{Hallock2021PhD,Hough2021}.

Here we explore the use of a \say{domain warping}, $\Phib:\Reals^n\to\Omegab$ (defined in \Cref{sec:merit_connection_maps}), to develop a merit function $\tilde{f}$ that alleviates the dependence on unrelaxable constraints so that solutions to the \emph{unconstrained} optimization problem
\begin{equation}
\underset{\xb\in \Reals^n}{\min} 
\; 
\ft(\xb) := f(\Phib(\xb))
\tag{WPROB}
\label{eq:WPROB}
\end{equation}
can be transformed into solutions to \ref{eq:PROB} through the domain warping. We adopt the term ``domain warping'' from the image processing community; see, for example,~\cite[Figure 3]{Stefanoski2013}. We leverage the domain warping to develop an analog of classical penalty approaches~\cite{nocedal2006no} that applies to problems with unrelaxable bound constraints. While various domain warpings are available, we focus on a multioutput sigmoidal warping. This formulation enjoys smoothness, is easy to use, can solve \ref{eq:PROB} accurately, and can leverage the vast suite of unconstrained optimization solvers, including those that can exploit specialized objective function forms. Furthermore, we develop an algorithm that exploits the structure of the sigmoidal warping to generate a sequence of solutions of \ref{eq:WPROB} whose warped limit points satisfy the Karush--Kuhn--Tucker (KKT) conditions for \ref{eq:PROB}. Under mild conditions we prove convergence as well as convergence rates of this algorithm when gradient descent is used as a subproblem solver.

In \Cref{sec:merit_connection_maps} we describe the fundamentals of using domain warpings to reformulate \ref{eq:PROB}. 
The choice of domain warping is critical because it affects the solution set of \ref{eq:WPROB} and thus the level of exactness with which solutions to \ref{eq:WPROB} can be transformed into solutions to \ref{eq:PROB}, as well as key properties of the unconstrained formulation such as smoothness; see \Cref{fig:connections_on_rosen}. We show in \Cref{sec:sigmoidal_connection} that the sigmoidal warping can be used to find interior solutions to \ref{eq:PROB} exactly and boundary solutions in a limiting sense. We also explore the practical elements of applying typical unconstrained optimization algorithms to \ref{eq:WPROB}.
After summarizing related work in \Cref{sec:related}, we present our numerical results in \Cref{sec:numerical_exp} to illustrate the performance on bound-constrained optimization problems. 
Additional algorithmic considerations based on the effect of the sigmoidal warping are described in Appendix~\ref{app:alg_recs}.

Throughout the paper, we employ the following core assumptions.
\begin{assumption}
\label{assumption:f_unrelaxable}
$f:\Omegab\to\Reals$ can be evaluated only at $\yb$ in $\Omegab$.
\end{assumption}

\begin{assumption}
\label{assumption:f_L_smooth}
$f$ is continuously differentiable on its domain, and the $i$th partial derivative of $f$ is $L_i$-Lipschitz continuous for  $i=1,\ldots,n$ ($f$ is $L$-smooth with $L = \sqrt{\sum{L_i^2}}$; see \Cref{def:L-smooth}). 
\end{assumption}

\begin{assumption}
\label{assumption:omega_cube}
Since any bound-constrained region with finite bounds satisfying $l_i < u_i$ for $i=1,\ldots,n$ can be rescaled to the unit cube, without loss of generality we assume that $\Omegab$ is the unit cube $[0,1]^n$.
\end{assumption}

\begin{figure}[t]
\includegraphics[scale=0.35]{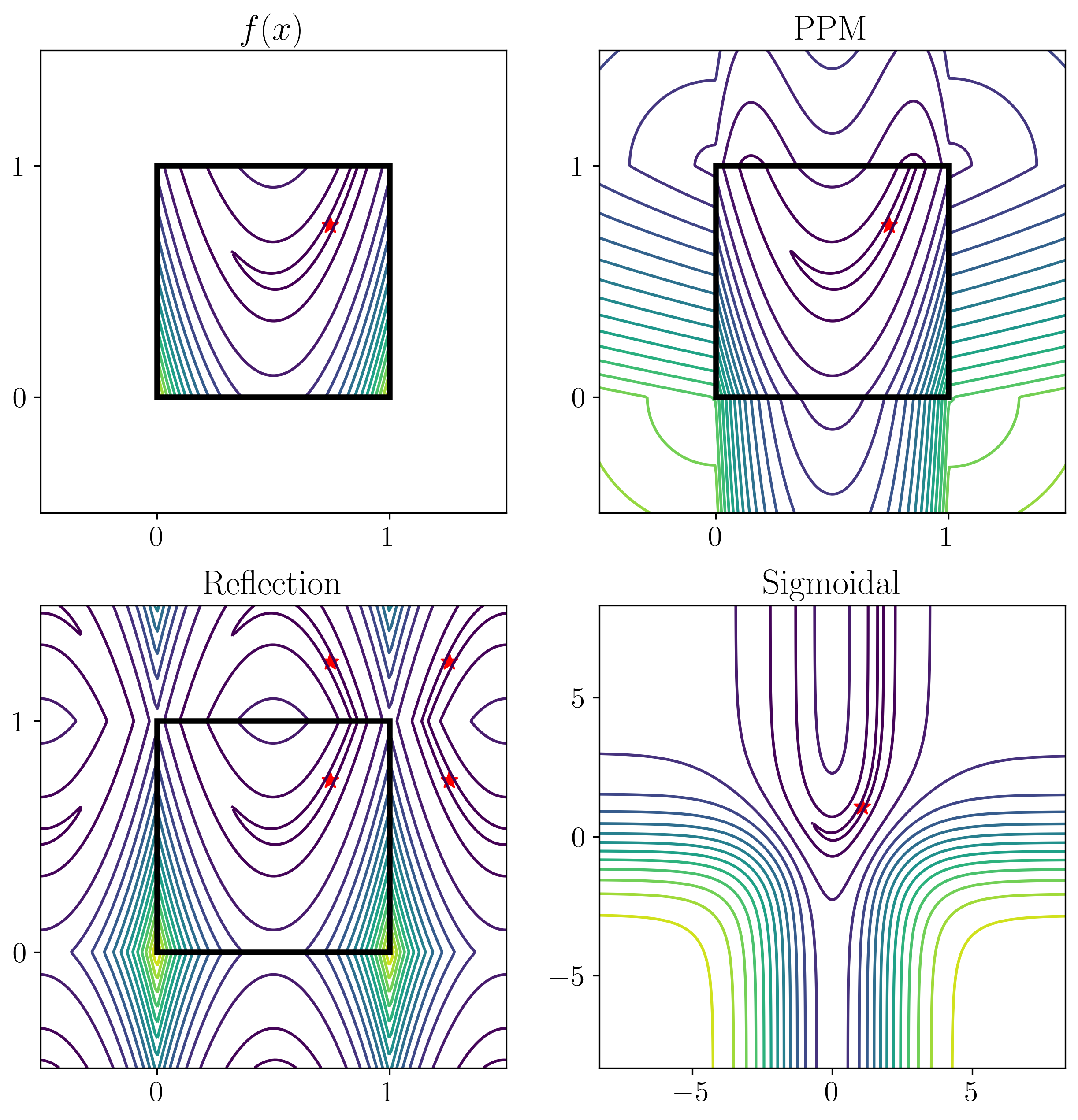}
\centering
\caption{Rosenbrock function mapped to the unit domain $[0,1]^2$ (top left) and merit functions under the sigmoidal, projection, and reflection domain warpings.
The sigmoidal warping forms the merit function in \cref{eq:ft_sigma_def}, the projection warping uses a distance penalty in the merit function \cref{eq:PPM},
and the reflection warping forms the merit function $f(\Rb(\xb))$ defined in \Cref{sec:merit_connection_maps}. The red star denotes the local minimum.}
\label{fig:connections_on_rosen}
\end{figure}

We use bold variables to indicate vectors and vector-valued mappings (e.g., $\xb$, $\Phib$). We use the componentwise product of vectors $\xb \odot\yb$ with entries $[\xb\odot\yb]_i=x_iy_i$ and componentwise quotients $\frac{\xb}{\yb}$ with entries $[\frac{\xb}{\yb}]_i=\frac{x_i}{y_i}$. We also compare vectors with inequalities: $\xb > \yb$ if and only if $x_i > y_i$ for $i=1,\ldots,n$. For sequences of vectors $\xb_k$, we index the components with double index notation $x_{k,i} = [\xb_{k}]_i$.
We use $\oneb$ to denote the vector of all ones and $\bm{e}_i$ to denote the vector of zeros with a one in component $i$. We denote the complement of an index set $I\subseteq\{1,\ldots,n\}$ by $I^c$.
The space $\Reals^n_{++}$ denotes the strictly positive orthant of $\Reals^n$. The norm $\|\cdot\|$ denotes the Euclidean norm.

\section{Merit Functions Based on Domain Warping}
\label{sec:merit_connection_maps}

By defining a continuous map, which we call a \textit{domain warping}, or simply \say{warping}, $\Phib:\Reals^n\to\Omegab$, we can construct a merit function $\ft(\xb) = f(\Phib(\xb))$ that can be used to find the minima of \ref{eq:PROB}, assuming a suitable warping is chosen. 
The primary focus of this paper is the vector-valued sigmoidal warping $\Sb(\xb) = \oneb/(\oneb+e^{-\sigmab\odot\xb})$ parameterized by $\sigmab\in\Reals^n_{++}$. The principal benefit of the sigmoidal warping is that it is smooth and thus the resulting merit function 
\begin{equation}
    \ft_{\sigmab}(\xb) = f(\Sb(\xb))
    \label{eq:ft_sigma_def}
\end{equation}
used in \ref{eq:WPROB} is smooth (when $f$ is smooth) and moreover the unconstrained problem can be solved by smooth optimization techniques. The sigmoidal warping benefits from being invertible and therefore also maps onto $\interior(\Omegab) = (0,1)^n$. While this implies that there is no point $\xb$ in the unconstrained domain that $\Sb$ maps to the boundary of $\Omegab$, the merit function $\ft_{\sigmab}$ can nonetheless be used to find boundary-lying KKT points of \ref{eq:PROB}: we show in \Cref{thm:limiting_stationarity} that as a sequence of points $\yb_k\in\interior(\Omegab)$ approaches a boundary KKT point of \ref{eq:PROB}, the corresponding sequence $\xb_k = \Sb^{-1}(\yb_k)$ in the unconstrained domain approaches a stationary point of the merit function. While the sigmoidal warping applies only to bound-constrained regions, analogs of the sigmoidal warping can be developed for nonnegativity constraints; simplexes; quadrilaterals; any smooth, invertible mappings of the former; and Cartesian products of these regions (see Appendix~\ref{appendix:warpings_for_other_sets}). 

Although we work with the sigmoidal warping,  %
similar
alternatives (e.g., $\tanh$, $\arctan$) are also smooth and invertible and map onto $(0,1)^n$. These maps can be characterized as smooth, componentwise, strictly increasing, bijective maps from $\Reals^n$ onto $\interior(\Omegab)$, possessing bounded derivatives.

In general, we do not need to restrict properties such as the smoothness, injectivity, or surjectivity of the domain warping; rather, we explore how these choices affect the solution set and practical elements of optimizing \ref{eq:WPROB}. For  bound-constrained problems we compare with two other warpings: the projection $\pib(\xb)$ onto the decision set and the \textit{reflection} warping $\Rb(\xb) = 2\abs{\frac{\xb}{2} - \lfloor \frac{\xb}{2} + \frac{\oneb}{2} \rfloor}$, otherwise known as a triangle wave with period two. \Cref{fig:connections_on_rosen} shows three merit functions for \ref{eq:PROB} when $f$ is the Rosenbrock function under the three choices of domain warping (sigmoidal, reflection, and projection). The projection and reflection warpings are nonsmooth,  noninvertible, but surjective warpings. Although the nonsmoothness gives these warpings access to the boundary of the domain, when used in a merit function, they require a penalty on the distance $d$ to the decision set (i.e., $f(\Phib(\xb)) + d(\xb)$) to ensure that all optima lie in $\Omegab$. Furthermore, the countably infinite non-differentiabilities generated by the reflection warping $\Rb$ pose practical problems for optimization. \Cref{fig:connection_maps} shows one-dimensional plots of the three domain warpings.
\begin{figure}[t]
\includegraphics[scale=0.35]{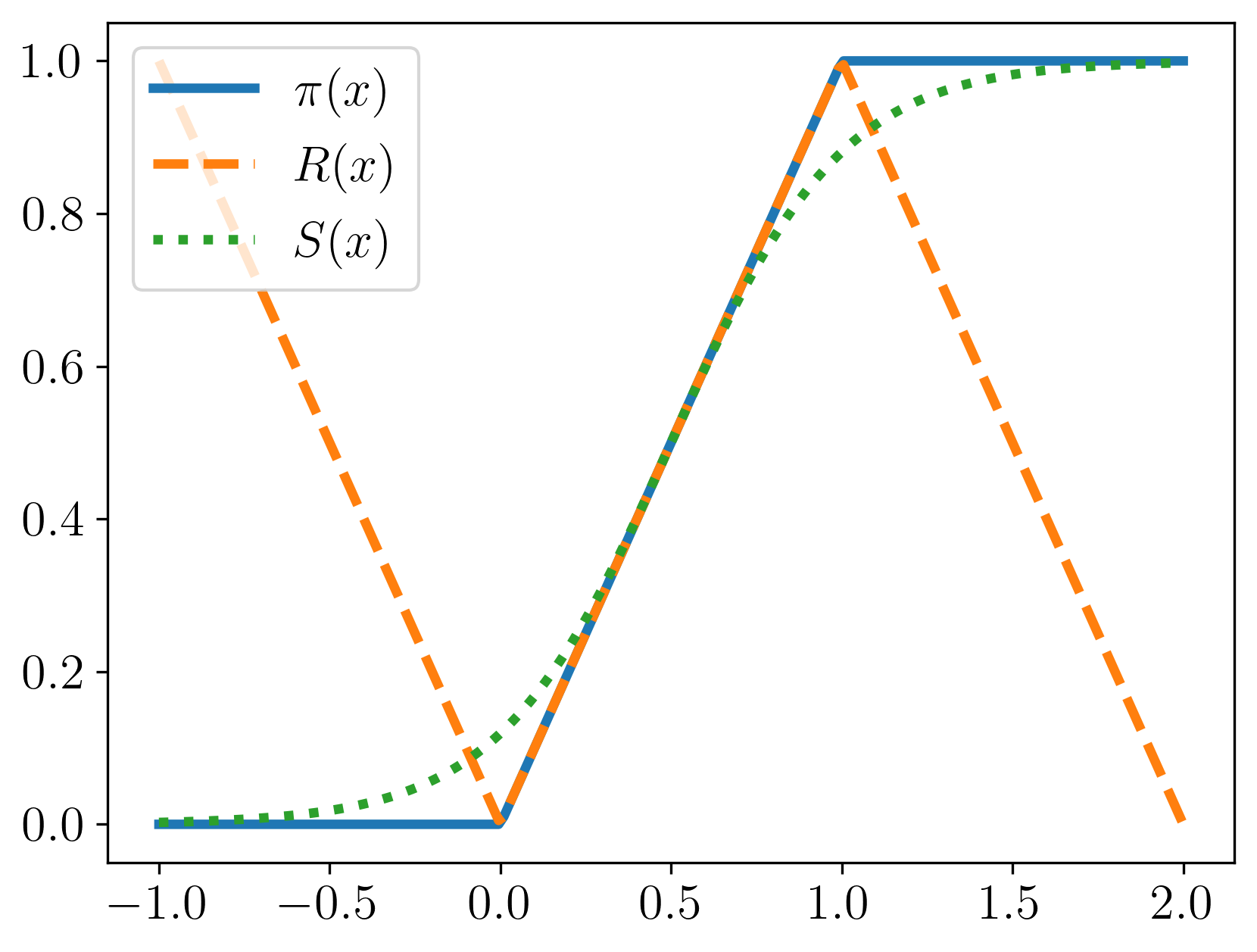}
\centering
\caption{Three domain warpings -- projection $\pib$, reflection $\Rb$, and sigmoidal $\Sb$ -- over the one-dimensional domain $[-1,2]$.}
\label{fig:connection_maps}
\end{figure}

Choosing $\Phib$ to be the projection warping and including a distance penalty
results in the projected penalty merit (PPM) function
from~\cite{Galvan2021}:
\begin{equation}
    \ft_{\pib}(\xb) = f(\pib(\xb)) + d(\xb).
    \tag{PPM}
    \label{eq:PPM}
\end{equation}
We numerically investigate the performance of this merit function in \Cref{sec:numerical_exp}.
Because this projection is constant along the normal cone
$N_{\Omegab}(\xb)$
of $\Omegab$ at a boundary point
$\xb$, a distance penalty is necessary to guarantee that the set of boundary-lying (Clarke) stationary points of the merit function are solutions to \ref{eq:PROB}. With the distance
penalty, the solutions to \ref{eq:WPROB} are exactly the solution set of
\ref{eq:PROB}, in that they share the same local minima and Clarke
stationary points~\cite{Galvan2021,Clarkebook}. From a practical standpoint,
using the projection warping can be cumbersome because the reformulation
is now nonsmooth along the boundary of $\Omegab$, making high-fidelity
resolution of minima along the boundary potentially difficult. A benefit of the projection
warping is that it can be readily used for any convex $\Omegab$, because the projection is uniquely
defined.

Similar to the projection warping, the reflection warping is an identity mapping for $\xb\in\Omegab$ and also induces nonsmoothness in the objective at the boundary of $\Omegab$. The reflection warping differs in that it is periodic; therefore, even, periodic objective functions such as $\cos(2\pi x)$ are repeated exactly on each interval $[k,k+1]$ for $k\in\mathbb{Z}$. In other words, when $f$ is even and periodic on $\Omegab$, the reflection warping tiles the unconstrained domain $\Reals$ with copies of \ref{eq:PROB}. In addition, the reflection warping applies only for particular choices of $\Omegab$ such as bound-constrained regions and half-planes. From a practical standpoint, the reflection warping $\Rb$ does not provide significant benefits over the projection warping, and so we do not discuss it further.

\section{Using the Sigmoidal Warping}
\label{sec:sigmoidal_connection}

We now detail the effects
of using the sigmoidal warping within the merit function $\ft_{\sigmab}$ in \ref{eq:WPROB}. In \Cref{sec:sigup} we propose an iterative scheme for updating the $\sigmab$ parameter, similar to a penalty method, that guarantees convergence of a sequence of solutions of \ref{eq:WPROB} to KKT points of \ref{eq:PROB} on the interior or boundary of $\Omegab$. In \Cref{sec:alg1_analysis} we prove the convergence of this method as well as bound the convergence rate, under mild assumptions.

First, we discuss basic properties of the sigmoidal warping. To map the unconstrained domain onto $(0, \, 1)^n$, we apply the one-dimensional sigmoidal warping $s(x)$ to each entry of the vector $\xb$, thus forming the vector-valued map $\Sb(\xb)$, where
\begin{equation}
    \Sb(\xb)_i = s(x_i) := \frac{1}{1+e^{-\sigma_ix_i}}.
    \label{eq:sigmoidal}
\end{equation}
The one-dimensional sigmoidal warping $s$ has derivatives
\begin{equation*}\label{eq:sigmoidal_derivatives}
    s'(x) = \sigma s(x)(1-s(x))
    \quad \mbox{and} \quad 
    s''(x) =\sigma^2 s(x)(1-s(x))(1-2s(x)),
\end{equation*}
and so the first- and second-order derivatives of the merit function $\ft_{\sigmab}$ are
\begin{align}
    \nabla \ft_{\sigmab}(\xb) &= J_{\sigmab}(\xb)\nabla f(\Sb(\xb))
    = \sigmab\odot\Sb(\xb)\odot(\oneb-\Sb(\xb))\odot \nabla f(\Sb(\xb))
    \\
    D^2\ft_{\sigmab}(\xb) &= H_{\sigmab}(\xb)\diag\left(\nabla f(\Sb(\xb))\right) + J_{\sigmab}(\xb)D^2f(\Sb(\xb)) J_{\sigmab}(\xb),
    \label{eq:ft_derivatives}
\end{align}
where the diagonal Jacobian $J_{\sigmab}(\xb)$ has diagonal $\sigmab\odot \Sb(\xb)\odot(\oneb-\Sb(\xb))$ and the diagonal second-derivative matrix $H_{\sigmab}(\xb)$ has diagonal $\sigmab^2\odot\Sb(\xb)\odot(\oneb-\Sb(\xb))\odot(\oneb-2\Sb(\xb))$. Importantly, the eigenvalues of the Jacobian are strictly positive when $\sigmab >\zerob$, making $J_{\sigmab}(\xb)$ positive definite for any $\xb$. Because the sigmoidal warping has Lipschitz-continuous derivatives and $\partial f$ is Lipschitz continuous, the derivatives of $\ft_{\sigmab}$ are also Lipschitz continuous, with a constant that depends on $\sigmab$.

\begin{lemma}
\label{thm:lipschitz_consant_bound}
If $f$ and $\nabla f$ are $\hat{L}$- and $L$-Lipschitz continuous, respectively, then $\nabla \ft_{\sigmab}$ is $\tilde{L}$-Lipschitz continuous for $\tilde{L} = \frac{1}{2}(\sigma_{\max}^2\hat{L} + \sigma_{\max}L)$ and $\sigma_{\max} = \max_i \sigma_i$.
\end{lemma}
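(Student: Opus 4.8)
The plan is to bound the Lipschitz constant of $\nabla \ft_{\sigmab}$ by exhibiting a uniform bound on the operator norm of the Hessian $D^2\ft_{\sigmab}(\xb)$ over all $\xb\in\Reals^n$, using the explicit formula \eqref{eq:ft_derivatives}. Recall that if a continuously differentiable vector field has Jacobian bounded in norm by $\tilde L$ everywhere, then it is $\tilde L$-Lipschitz; so it suffices to show $\norm{D^2\ft_{\sigmab}(\xb)}\le \tilde L$ for the claimed $\tilde L$. From \eqref{eq:ft_derivatives} we split $D^2\ft_{\sigmab}(\xb)$ into two pieces: the diagonal term $H_{\sigmab}(\xb)\diag(\nabla f(\Sb(\xb)))$ and the term $J_{\sigmab}(\xb)D^2f(\Sb(\xb))J_{\sigmab}(\xb)$. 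I would bound each separately and add.

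First I would handle the diagonal piece. Its $i$th diagonal entry is $\sigma_i^2\, s(x_i)(1-s(x_i))(1-2s(x_i))\cdot \partial_i f(\Sb(\xb))$. The scalar function $t\mapsto t(1-t)(1-2t)$ on $[0,1]$ attains maximum absolute value $\tfrac{1}{6\sqrt3}$; more crudely, $t(1-t)\le \tfrac14$ and $\abs{1-2t}\le 1$, so each diagonal entry is at most $\tfrac14\sigma_i^2\abs{\partial_i f(\Sb(\xb))}$ in absolute value. Using $\abs{\partial_i f}\le \hat L$ (which follows since $f$ being $\hat L$-Lipschitz bounds each partial derivative, or at least $\norm{\nabla f}\le\hat L$), and $\sigma_i\le\sigma_{\max}$, the operator norm (= largest absolute diagonal entry) of this term is at most $\tfrac14\sigma_{\max}^2\hat L$. (If one wants the tightest constant matching the statement's $\tfrac12$, the crude $\tfrac14$ bound already suffices, so I would not chase the $\tfrac{1}{6\sqrt3}$ refinement.)

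Next the second piece: $J_{\sigmab}(\xb)$ is diagonal with entries $\sigma_i s(x_i)(1-s(x_i))\in(0,\sigma_i/4]$, so $\norm{J_{\sigmab}(\xb)}\le \tfrac14\sigma_{\max}$, and $\norm{D^2 f(\Sb(\xb))}\le L$ since $\nabla f$ is $L$-Lipschitz. Hence $\norm{J_{\sigmab}(\xb)D^2f(\Sb(\xb))J_{\sigmab}(\xb)}\le \tfrac{1}{16}\sigma_{\max}^2 L$. Adding the two bounds gives $\norm{D^2\ft_{\sigmab}(\xb)}\le \tfrac14\sigma_{\max}^2\hat L + \tfrac1{16}\sigma_{\max}^2 L$, which is comfortably below the claimed $\tfrac12(\sigma_{\max}^2\hat L+\sigma_{\max}L)$ — so the stated constant holds with room to spare. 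The main thing to get right is the bookkeeping: making sure the "$\hat L$ bounds $\abs{\partial_i f}$" step is justified and that the operator-norm-of-a-product-of-diagonals estimate is applied correctly. There is no real obstacle here; the only subtlety is that the paper's stated constant $\tilde L=\tfrac12(\sigma_{\max}^2\hat L+\sigma_{\max}L)$ appears to mix a $\sigma_{\max}^2$ and a $\sigma_{\max}$ term, whereas the natural bound is homogeneous of degree two in $\sigma_{\max}$ (assuming $\sigma_{\max}\ge 1$, $\sigma_{\max}\le\sigma_{\max}^2$, so the claimed form still dominates); I would note this and otherwise the estimate is routine.
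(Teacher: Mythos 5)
Your strategy---uniformly bounding $\|D^2\ft_{\sigmab}(\xb)\|$ via \cref{eq:ft_derivatives} and invoking the mean-value inequality---differs from the paper's proof, which works directly with the first-order difference $\nabla\ft_{\sigmab}(\xb)-\nabla\ft_{\sigmab}(\ub)$, adds and subtracts a cross term, and bounds the two pieces using Lipschitz constants of $\Sb(\cdot)\odot(\oneb-\Sb(\cdot))$ and of $\nabla f$. The decisive problem is your final comparison step. You correctly derive $\|D^2\ft_{\sigmab}(\xb)\|\le\tfrac14\sigma_{\max}^2\hat L+\tfrac1{16}\sigma_{\max}^2L$, but the claim that this is ``comfortably below'' $\tilde L=\tfrac12(\sigma_{\max}^2\hat L+\sigma_{\max}L)$ is false in general: comparing the $L$-terms, $\tfrac1{16}\sigma_{\max}^2L\le\tfrac12\sigma_{\max}L$ requires $\sigma_{\max}\le8$, and for $\sigma_{\max}>8$ with $L$ large relative to $\hat L$ (e.g.\ $\hat L$ small, $\sigma_{\max}=100$) your bound exceeds the stated $\tilde L$. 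Your closing parenthetical runs the inequality the wrong way: $\sigma_{\max}\le\sigma_{\max}^2$ makes the claimed term $\tfrac12\sigma_{\max}L$ \emph{smaller} than a degree-two term, so it does not dominate your $\tfrac1{16}\sigma_{\max}^2L$. So even granting every intermediate estimate, your argument does not deliver the stated constant. (The inhomogeneity you noticed is a real issue and should not be waved away: the inner composition with $\Sb$ contributes a factor of $\sigma_{\max}/4$ to $\|\nabla f(\Sb(\xb))-\nabla f(\Sb(\ub))\|$, which is why the honest coefficient on $L$ is quadratic in $\sigma_{\max}$; the paper's proof instead bounds this difference by $L\|\xb-\ub\|$ outright to reach its degree-one term.)

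A second, lesser gap: the Hessian formula you rely on presupposes that $D^2f$ exists, whereas the lemma assumes only that $\nabla f$ is $L$-Lipschitz. This is patchable (a.e.\ second differentiability plus integration along segments, or mollification), but as written your route uses more regularity than is hypothesized; the paper's first-order argument avoids the issue entirely. To salvage your approach you would either need to prove the weaker (correct) bound $\tfrac14\sigma_{\max}^2\hat L+\tfrac1{16}\sigma_{\max}^2L$ and state it as such, or find an argument that genuinely produces a term that is first order in $\sigma_{\max}$ multiplying $L$.
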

\begin{proof}
For any $\xb,\ub\in\Reals^n$ with $\yb=\Sb(\xb)$ and $\vb=\Sb(\ub)$, 
\begin{align*}
    \|\nabla \ft_{\sigmab}(\xb) - \nabla \ft_{\sigmab}(\ub) \| & = \|\sigmab\odot\yb\odot(\oneb-\yb)\odot\nabla f(\yb) - \sigmab\odot\vb\odot(\oneb-\vb)\odot \nabla f(\vb) \|
    \\
    & \le \sigma_{\max}\|\yb\odot(\oneb-\yb) \odot\nabla f(\yb) - \vb\odot(\oneb-\vb)\odot\nabla f(\vb) \|
    \\
    & \le \sigma_{\max}\|(\yb\odot(\oneb-\yb) -\vb\odot(\oneb-\vb))\odot\nabla f(\yb)\| \\
    & \hspace{10pt} + \sigma_{\max}\|\vb\odot(\oneb-\vb)\odot (\nabla f(\yb) - \nabla f(\vb))\|.
\end{align*}
We further extend the bound by using $\hat{L}$-Lipschitz continuity of $f$ to bound $\nabla f(\yb)$, the upper bound $\vb\odot(\oneb-\vb) \le \frac{1}{4}$, and $L$-Lipschitz continuity of $\nabla f$:
\begin{align*}
    &\le \sigma_{\max}\hat{L}\|(\yb\odot(\oneb-\yb) -\vb\odot(\oneb-\vb))\| + \frac{\sigma_{\max}}{4}\| \nabla f(\yb) - \nabla f(\vb)\|
    \\
    &\le \sigma_{\max}\hat{L}\|(\yb\odot(\oneb-\yb) -\vb\odot(\oneb-\vb))\| + \frac{\sigma_{\max}L}{4}\|\xb-\ub\| .
\end{align*}
A Lipschitz bound of $\Sb(\xb)\odot(\oneb-\Sb(\xb))$ is $\sigma_{\max}/2$, which can be computed by upper bounding the eigenvalues of the Jacobian. We arrive at our final expression:
\begin{align*}
    &\le \frac{2\sigma_{\max}^2\hat{L} + \sigma_{\max}L}{4}\|\xb-\ub\|
    \le \frac{1}{2}\left(\sigma_{\max}^2\hat{L} + \sigma_{\max}L\right)\|\xb-\ub\| .
\end{align*}
\end{proof}

A downside of the sigmoidal warping is that it does not preserve convexity. 
Even if $f$ is (strongly) convex, $\ft_{\sigmab}$ may not be (strongly) convex. 
Despite not preserving convexity, this warping preserves interior minima and stationary points. The following theorem establishes a one-to-one mapping between interior stationary points of $f$ and stationary points of $\ft_{\sigmab}$.
\begin{theorem}
Let $f$ and $\Omegab$ satisfy \Cref{assumption:f_L_smooth} and \Cref{assumption:omega_cube} and let $\sigmab >\zerob$. A point $\yb^*$ on the interior of $\Omegab$ is a stationary point of $f$ if and only if its warped point $\xb^* = \Sb^{-1}(\yb^*)$ is a stationary point of $\ft_{\sigmab}$.
\label{thm:interior_stationary_points}
\end{theorem}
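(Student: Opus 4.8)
The plan is to exploit the explicit gradient formula $\nabla \ft_{\sigmab}(\xb) = \sigmab\odot\Sb(\xb)\odot(\oneb-\Sb(\xb))\odot \nabla f(\Sb(\xb))$ derived above, together with the fact that each factor $\sigma_i s(x_i)(1-s(x_i))$ is strictly positive for every $\xb \in \Reals^n$ whenever $\sigmab > \zerob$. Since $\Sb$ maps $\Reals^n$ bijectively onto $\interior(\Omegab) = (0,1)^n$, for any interior point $\yb^* \in \interior(\Omegab)$ there is a unique preimage $\xb^* = \Sb^{-1}(\yb^*)$, and $\Sb(\xb^*) = \yb^*$ lies in $(0,1)^n$, so $s(x_i^*)(1-s(x_i^*)) \in (0,1/4]$ is strictly positive in every coordinate.

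First I would write $\nabla \ft_{\sigmab}(\xb^*) = J_{\sigmab}(\xb^*)\,\nabla f(\yb^*)$, where $J_{\sigmab}(\xb^*)$ is the diagonal matrix with strictly positive diagonal entries $\sigma_i s(x_i^*)(1-s(x_i^*))$. A diagonal matrix with nonzero diagonal entries is invertible, so $\nabla \ft_{\sigmab}(\xb^*) = \zerob$ holds if and only if $\nabla f(\yb^*) = \zerob$. The forward direction ($\yb^*$ stationary for $f$ implies $\xb^*$ stationary for $\ft_{\sigmab}$) is then immediate: $\nabla f(\yb^*) = \zerob$ forces the product to vanish. The reverse direction follows by multiplying $\nabla \ft_{\sigmab}(\xb^*) = \zerob$ on the left by $J_{\sigmab}(\xb^*)^{-1}$ (equivalently, by dividing each coordinate by the strictly positive scalar $\sigma_i s(x_i^*)(1-s(x_i^*))$), which yields $\nabla f(\yb^*) = \zerob$. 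Here "stationary point of $f$" for an interior point $\yb^*$ means $\nabla f(\yb^*) = \zerob$ in the ordinary unconstrained sense, which coincides with the KKT condition for \ref{eq:PROB} at an interior point since all bound constraints are inactive there; I would note this identification explicitly so the statement is unambiguous.

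There is essentially no hard obstacle: the result is a direct consequence of the chain rule and the positive definiteness of the diagonal Jacobian, both of which are already established in the text preceding the theorem. The only points requiring care are (i) confirming that the preimage $\xb^*$ exists and is unique, which is exactly the invertibility of $\Sb$ onto $\interior(\Omegab)$ noted in \Cref{sec:sigmoidal_connection}, and (ii) being precise that the equivalence is between first-order stationarity of the two problems and does not extend to preservation of convexity or of the nature of the critical point (minimum versus saddle) — indeed the discussion immediately before the theorem warns that convexity is not preserved. I would close by remarking that the same argument applies verbatim to any smooth, componentwise strictly increasing bijection from $\Reals^n$ onto $\interior(\Omegab)$ with nonvanishing derivative, since only the nonsingularity of the (diagonal) Jacobian is used.
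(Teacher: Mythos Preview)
Your proposal is correct and follows essentially the same approach as the paper: both use the chain-rule identity $\nabla \ft_{\sigmab}(\xb^*) = J_{\sigmab}(\xb^*)\nabla f(\yb^*)$ together with the observation that the diagonal Jacobian $J_{\sigmab}(\xb^*)$ has trivial null space (equivalently, is invertible) whenever $\sigmab > \zerob$, and both invoke the bijectivity of $\Sb$ onto $\interior(\Omegab)$ to justify the correspondence $\xb^* \leftrightarrow \yb^*$. Your additional remarks on the KKT interpretation at interior points and on the extension to other componentwise strictly increasing bijections are accurate but not needed for the theorem as stated.
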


\begin{proof}
By \Cref{assumption:omega_cube} and \cref{eq:sigmoidal},
there is a one-to-one mapping between points $\yb^*$ in the interior of $\Omegab$ and $\xb^*$ in $\Reals^n$. By \Cref{assumption:f_L_smooth}, the first derivative of $\ft_{\sigmab}$ exists and is $\nabla \ft_{\sigmab}(\xb^*) = J_{\sigmab}(\xb^*)\nabla f(\yb^*)$. For any $\xb\in\Reals^n$ the null space of the Jacobian $J_{\sigmab}(\xb)$ is $\{\zerob\}$ since $\sigmab >\zerob$. Thus $\nabla \ft_{\sigmab}(\xb^*) = J_{\sigmab}(\xb^*)\nabla f(\yb^*) = \zerob$ if and only if $\nabla f(\yb^*) = \zerob$. 
\end{proof}

Unfortunately, $\ft_{\sigmab}$ cannot \textit{exactly} reproduce the behavior of $f$ at the boundary of $\Omegab$. The inverse domain warping $\Sb^{-1}:(0,1)^n \to \Reals^n$ is $\Sb^{-1}(\xb) = \log(\xb/(\oneb-\xb))/\sigmab$. Effectively, the boundary of $\Omegab$ maps to $\pm\infty$. In theory $\xb$ would have to become infinitely large for $\Sb(\xb)$ to recover a boundary point, including KKT points of \ref{eq:PROB}. In practice, however, $\xb$ need not become particularly large since $\xb$ can approximate boundary points of $\Omegab$ from the shrunken domain, $[a,1-a]^n$ for $0<a<1/2$, which maps to a finite interval under the domain warping.
We can measure the size of the effective domain of $\xb$ by bounding the norm of points $\xb\in\Reals^n$ when $\Sb(\xb)$ is a distance $a$ from the boundary.
\begin{remark}
Let $\yb\in[a,1-a]^n$ with $0< a <1/2$; then, $\left\| \Sb^{-1}(\yb) \right\|_\infty \leq \log\left(\dfrac{1-a }{a }\right)$.
\end{remark}
While it is necessary for an entry $x_{k,i}\to\pm\infty$ for $y_{k,i}$ to converge to a point on the $i$th boundary, the preceding remark implies that $x_{k,i}$ is bounded if $y_{k,i}$ remains a finite nonzero distance from the boundary. 
For instance, if $\Sb(\xb)$ is at least a distance $10^{-3}$ from a boundary point in infinity norm, then
$\xb$ has a largest entry of no more than $6.9$. 
Furthermore, if we restrict $\yb$ to be at least a distance of $10^{-16}$ from the boundary, we know that $\|\Sb^{-1}(\yb)\|_{\infty}< 37$.

Although our approach cannot exactly produce KKT points of $f$ on the boundary of $\Omegab$, the gradient of $\ft_{\sigmab}$ does approach zero as $\Sb(\xb)$ approaches a KKT point. In the following theorem we show that KKT points of \ref{eq:PROB} are limiting stationary points of the merit function $\ft_{\sigmab}$. The implication is that unconstrained optimization techniques that converge to stationary points of $\ft_{\sigmab}$ will be able to 
approximate KKT points of \ref{eq:PROB} arbitrarily well.
\begin{theorem}
\label{thm:limiting_stationarity}
Let $f$ and $\Omegab$ satisfy \Cref{assumption:f_L_smooth} and \Cref{assumption:omega_cube} and let $\sigmab >\zerob$. Let $\yb^*\in\Omegab$ be a KKT point of \ref{eq:PROB} and $\{\yb_k\}\in\interior(\Omegab)$ be a sequence of points that converge to $\yb^*$ with $\{\xb_k = \Sb^{-1}(\yb_k)\}$ being the corresponding points in $\Reals^n$. Then $\nabla \ft_{\sigmab}(\xb_k)\to 0$.
\end{theorem}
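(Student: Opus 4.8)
The plan is to work directly from the closed-form gradient in \cref{eq:ft_derivatives} and reduce to a componentwise limit. Since each $\yb_k$ lies in $\interior(\Omegab)$, the point $\xb_k = \Sb^{-1}(\yb_k)$ satisfies $\Sb(\xb_k) = \yb_k$, so
\[
[\nabla \ft_{\sigmab}(\xb_k)]_i = \sigma_i\, y_{k,i}(1 - y_{k,i})\, [\nabla f(\yb_k)]_i, \qquad i = 1,\dots,n,
\]
and it suffices to show each of these $n$ scalar sequences tends to $0$. Before the case split I would record two standing facts: (i) $0 \le y_{k,i}(1-y_{k,i}) \le \tfrac14$ for every $k$ and $i$; and (ii) by \Cref{assumption:f_L_smooth} the gradient $\nabla f$ is continuous on $\Omegab$, so $\yb_k \to \yb^*$ forces $\nabla f(\yb_k) \to \nabla f(\yb^*)$, and in particular each $\{[\nabla f(\yb_k)]_i\}$ is a bounded sequence with a finite limit.

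The argument then splits on whether the $i$th bound is active at $\yb^*$. If $y^*_i \in (0,1)$, the KKT conditions for \ref{eq:PROB} at $\yb^*$ force $[\nabla f(\yb^*)]_i = 0$, so $[\nabla f(\yb_k)]_i \to 0$ by fact (ii); multiplying by the bounded factor $\sigma_i\, y_{k,i}(1-y_{k,i})$ from fact (i) gives $[\nabla \ft_{\sigmab}(\xb_k)]_i \to 0$. If instead $y^*_i \in \{0,1\}$, then $y_{k,i} \to 0$ or $1 - y_{k,i} \to 0$, so $y_{k,i}(1-y_{k,i}) \to 0$; since $\sigma_i$ is a fixed finite constant and $[\nabla f(\yb_k)]_i$ is bounded by fact (ii), the product $[\nabla \ft_{\sigmab}(\xb_k)]_i \to 0$. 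Combining the two cases over $i = 1,\dots,n$ yields $\nabla \ft_{\sigmab}(\xb_k) \to 0$.

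There is essentially no difficult step here; the points requiring a little care are organizational rather than technical. First, one should note that the gradient formula is valid at every $\xb_k$ because $\ft_{\sigmab}$ is differentiable on all of $\Reals^n$ (the chain rule applies since $\Sb$ is smooth and $f$ is $C^1$ on $\Omegab$). Second — and this is the only conceptual subtlety — the proof never requires the iterates $\xb_k$ to remain bounded, and indeed they need not when $\yb^*$ lies on $\boundary(\Omegab)$; it is precisely the vanishing of the sigmoid derivative factor $y_{k,i}(1-y_{k,i})$, paired with the a priori boundedness of $\nabla f$ near the KKT point, that handles the active-constraint directions. Thus the boundary case, which is the whole reason the theorem is nontrivial, is rescued by the structure of the warping rather than by any estimate on $\xb_k$.
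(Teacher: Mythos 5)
Your proof is correct and follows essentially the same route as the paper's: split the components according to whether the bound is active at $\yb^*$, use stationarity plus complementary slackness to kill $\partial_i f(\yb^*)$ in the inactive components, and use the vanishing of the factor $y_{k,i}(1-y_{k,i})$ in the active ones. Your handling of the active components is in fact slightly cleaner than the paper's, which bounds $\abs{\partial_i f(\yb_k)}$ via the Lipschitz constant and the Lagrange multiplier where simple boundedness from continuity of $\nabla f$ (as you observe) already suffices.
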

\begin{proof}
The case where $\yb^* \in \interior(\Omegab)$ is shown by a direct application of \Cref{thm:interior_stationary_points} to the sequence $\{\xb_k\}$. We therefore consider the case where there is a nonempty set of indices $I$ where $y^*_i$ for $i \in I$ are on the boundary of $\Omegab$. By \Cref{assumption:omega_cube}, $y^*_i\in\{0,1\}$ for all $i \in I$. The sequence $\{\xb_k\}$ then diverges for the components in $I$: $x_{k,i} \to \sign(2y^*_i - 1)\infty$ for $i\in I$. By the definition of $\Sb$, the partial derivative of the warping converges to zero in each of these components as $x_{k,i}\to\sign(2y^*_i - 1)\infty$. That is,  $s'(x_{k,i}) \to 0$ for $i\in I$. The remaining components of $\{\xb_k\}$ converge, by continuity of $s$, to $x_{k,i} \to s^{-1}(y^*_i)$ for $i\in I^c$. 

Now that we know the limits of the sequences $\{\xb_k\}$ and $\{\yb_k\}$, we will find the limits of the partial derivative sequences $\partial_i f(\yb_k)$, for which we appeal to the KKT conditions. Since $\yb^*$ is a KKT point, there exists a Lagrange multiplier $\lambdab^*$ such that the KKT conditions hold (see \Cref{thm:kkt} in Appendix~\ref{sec:background}) in particular, the Lagrange multiplier satisfies dual feasibility $\sign(\lambda_i^*) \in\{ \sign(2y^*_i - 1),0\}$ for $i\in I$, complementary slackness $\lambda_i^* = 0$ for $i\in I^c$, and the stationary condition $\partial_i f(\yb^*) + \lambda_i^*  = 0$ for all $i$. Because $\partial_i f$ is continuous by  \Cref{assumption:f_L_smooth}, the sequence of partial derivatives $\{\partial_i f(\yb_k)\}$ converges to $ \partial_i f(\yb^*)$. Furthermore, by the KKT conditions, $\partial_i f(\yb_k) \to \partial_i f(\yb^*) = -\lambda_i^*$ for all $i$.

Now that we have the limit of the sequence $\{\partial_i f(\yb_k)\}$, we can compute the limit of the sequence $\{\partial_i \ft_{\sigmab}(\xb_k)\}$.
First, consider the components $i\in I^c$. For these components, the partial derivative at the KKT point satisfies $\partial_i f(\yb^*) =-\lambda_i^*= 0$. Because $\partial_i f$ is continuous, $s'$ is continuous, and compositions and products of continuous functions are continuous, $\partial_i \ft_{\sigmab}$ is continuous. Thus the sequence of partial derivatives $\{\partial_i \ft_{\sigmab}(\xb_k)\}$ with $\partial_i \ft_{\sigmab}(\xb_k) = \sigma_iy_{k,i}(1-y_{k,i})\partial_i f(\yb_k)$ converges to $\sigma_iy_{i}^*(1-y_{i}^*)\partial_i f(\yb^*) = 0$ for $i\in I^c$. Moreover, for $i\in I^c$ the sequence $\partial_i\ft_{\sigmab}(\xb_k)  \to 0$. 

Now consider the components, $i\in I$, of the stationary point $\yb^*$ that lie on the boundary of $\Omegab$. We show that $\{\partial_i\ft_{\sigmab}(\xb_k)\}$ converges to zero. As $y_{k,i} \to y^*_i$, for any $\rho>0$ there exists an $N$ such that for all $k \geq N$, $\|\yb_k - \yb^*\|_{\infty} \leq \rho$. By Lipschitz continuity of $\partial_i f$ with constant $L_i$ (\Cref{assumption:f_L_smooth}), $\abs{\partial_i f(\yb_k)} \leq \abs{\partial_if(\yb^*)} + L_i\abs{y_{k,i} - y^*_i}$. Substituting the Lagrange multiplier gives us $\abs{\partial_i f(\yb_k)} \leq \abs{\lambda_i^*} + L_i\abs{y_{k,i} - y^*_i}$. With these, we can bound the value of $\partial_i\ft_{\sigmab}(\xb_k)$:
\begin{align*}
    \abs{\partial_i\ft_{\sigmab}(\xb_k)} %
    &= s'(x_{k,i})\abs{\partial_if(\yb_k)}
    \; \leq \; s'(x_{k,i})(\abs{\lambda_i^*} + L_i\abs{y_{k,i} - y^*_i})
    \\
    &=\sigma_iy_{k,i}\left(1-y_{k,i}\right)\left(\abs{\lambda_i^*} +L_i\abs{y_{k,i} - y^*_i}\right)
    \\
    &\leq \rho\sigma_i\left(\abs{\lambda_i^*} +L_i\rho\right).
\end{align*}
Thus, as $y_{k,i}$ approaches $y^*_i$ (i.e., $\rho\to 0$), the sequence $\{\partial_i\ft_{\sigmab}(\xb_k)\}$ approaches zero, implying that the sequence $\partial_i\ft_{\sigmab}(\xb_k) \to 0$. 
Thus, $\nabla\ft_{\sigmab}(\xb_k) \to 0$.%
\end{proof}

While \Cref{thm:limiting_stationarity} demonstrates that $\ft_{\sigmab}$ can be used to find boundary-lying KKT points of \ref{eq:PROB}, it is not yet clear how accurately we can resolve these KKT points by optimizing $\ft_{\sigmab}$. The following theorem bounds the error in the KKT stationary condition in terms of  $\abs{\partial\ft_{\sigmab}}$ and the parameter $\sigmab$. This bound shows that when approaching a minimum near or on the boundary of $\Omegab$, the partial derivatives of $\ft_{\sigmab}$ may approach zero prematurely. However, the bound also shows that the parameter $\sigmab$ can control the error in the stationary condition, similar to a penalty parameter in a penalty method.

\begin{theorem}
\label{thm:stationary_condition}
For $i=1,\ldots,n$, if $\sigmab >\zerob$ and the partial derivatives satisfy $\abs{\partial_i \ft_{\sigmab}(\xb)} \leq \delta_i$, then $\abs{\partial_i f(\yb)} \le \dfrac{\delta_i}{\sigma_i y_i(1-y_i)}$ at $\yb = \Sb(\xb)$. 

If additionally $\abs{\partial_i f(\yb)} > 0$ and $\yb^*\in\Omegab$ is a KKT point of \ref{eq:PROB}, under \Cref{assumption:f_L_smooth} and \Cref{assumption:omega_cube}, with corresponding Lagrange multiplier $\lambdab^*$, then, for entries of $\yb^*$ that lie on the boundary of $\Omegab$,  
$\abs{\partial_if(\yb) +\lambda_i^*} \leq\dfrac{L_i\delta_i}{\abs{\partial_i f(\yb)}\sigma_i\Delta_i}$, where $\Delta_i=\abs{1-y_i-y_i^*}$. 
\end{theorem}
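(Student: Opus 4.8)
The plan is to prove the two displayed bounds in turn: the first is a direct consequence of the componentwise form of $\nabla\ft_{\sigmab}$, and the second builds on the first together with the KKT stationarity condition and the Lipschitz continuity of $\partial_i f$.

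For the first bound I would start from the coordinate expression $\partial_i\ft_{\sigmab}(\xb) = \sigma_i y_i(1-y_i)\,\partial_i f(\yb)$, read off from $\nabla\ft_{\sigmab}(\xb) = \sigmab\odot\Sb(\xb)\odot(\oneb-\Sb(\xb))\odot\nabla f(\Sb(\xb))$ with $\yb=\Sb(\xb)$. Since $\Sb$ maps into $(0,1)^n$ we have $y_i\in(0,1)$, so $y_i(1-y_i)>0$, and together with $\sigma_i>0$ this lets us divide to get $\abs{\partial_i f(\yb)} = \abs{\partial_i\ft_{\sigmab}(\xb)}/(\sigma_i y_i(1-y_i)) \le \delta_i/(\sigma_i y_i(1-y_i))$, which is the claim.

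For the second bound, let $i$ be an index for which $y_i^*$ lies on the boundary of $\Omegab$, so $y_i^*\in\{0,1\}$ by \Cref{assumption:omega_cube}. The key observation is the elementary identity $y_i(1-y_i) = \abs{y_i-y_i^*}\,\Delta_i$ with $\Delta_i=\abs{1-y_i-y_i^*}$, which I would verify by the two cases $y_i^*=0$ (then $\abs{y_i-y_i^*}=y_i$ and $\Delta_i=1-y_i$) and $y_i^*=1$ (then $\abs{y_i-y_i^*}=1-y_i$ and $\Delta_i=y_i$); in both cases $\Delta_i\in(0,1)$, hence nonzero. Substituting this identity into the first bound and using $\abs{\partial_i f(\yb)}>0$ to rearrange yields $\abs{y_i-y_i^*}\le\delta_i/(\sigma_i\abs{\partial_i f(\yb)}\Delta_i)$. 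Finally I would invoke the KKT stationarity condition $\partial_i f(\yb^*)+\lambda_i^*=0$, so that $\lambda_i^*=-\partial_i f(\yb^*)$, together with the $L_i$-Lipschitz continuity of $\partial_i f$ from \Cref{assumption:f_L_smooth} to obtain $\abs{\partial_i f(\yb)+\lambda_i^*} = \abs{\partial_i f(\yb)-\partial_i f(\yb^*)} \le L_i\abs{y_i-y_i^*}$; chaining this with the previous inequality gives exactly $\abs{\partial_i f(\yb)+\lambda_i^*}\le L_i\delta_i/(\abs{\partial_i f(\yb)}\sigma_i\Delta_i)$.

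The calculation is short, so the main thing to be careful about is the Lipschitz step, where one passes from the change in $\partial_i f$ to $L_i\abs{y_i-y_i^*}$; this is the componentwise reading of \Cref{assumption:f_L_smooth} already used in the proof of \Cref{thm:limiting_stationarity}, and I would apply it consistently here (it is exact in the regime of interest, where $\yb$ and $\yb^*$ differ only in the boundary components). The other point requiring a line of bookkeeping is to confirm that every quantity appearing in a denominator — $\sigma_i$, $y_i(1-y_i)$, $\Delta_i$, and $\abs{\partial_i f(\yb)}$ — is strictly positive before dividing, which follows respectively from $\sigmab>\zerob$, from $y_i\in(0,1)$, from the case analysis above, and from the standing hypothesis $\abs{\partial_i f(\yb)}>0$.
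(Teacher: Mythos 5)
Your proposal is correct and follows essentially the same route as the paper: divide the componentwise gradient identity to get the first bound, use the identity $y_i(1-y_i)=\abs{y_i-y_i^*}\,\Delta_i$ for boundary components (the paper writes it as $\abs{y_i-y_i^*}=s'(x_i)/(\sigma_i\Delta_i)$, which is the same relation), and combine the KKT stationarity condition with the componentwise Lipschitz bound $\abs{\partial_i f(\yb)-\partial_i f(\yb^*)}\le L_i\abs{y_i-y_i^*}$. The only cosmetic difference is that you rearrange the first bound before applying Lipschitz continuity, whereas the paper applies Lipschitz continuity first and then substitutes; the chain of inequalities is identical.
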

\begin{proof}
We begin by proving the first bound stated in the theorem. Since $\sigmab >\zerob$, the partial derivative of $f$ at $\yb$ is bounded by
\begin{equation}
\abs{\partial_if(\yb)}
=\abs{\dfrac{\partial_i\ft_{\sigmab}(\xb)}{s'(x_i)}} 
\leq \dfrac{\delta_i}{s'(x_i)} 
= \dfrac{\delta_i}{\sigma_i y_i(1-y_i)}.
\label{eq:kkt_err_bound_interior}
\end{equation}
Now we prove the bound on the boundary components of the KKT point, namely, $y^*_i$.
We use the fact that for components $y^*_i$ that lie on the boundary of $\Omegab$, $\abs{y_i-y^*_i} = \dfrac{s'(x_i)}{\sigma_i\abs{1-y_i-y^*_i}}$, which can be verified by using $s'(x_i) = (1-y_i)y_i\sigma_i$, $y^*_i \in \{0,1\}$ (by \Cref{assumption:omega_cube}), and noting that $(1-y_i)y_i\neq 0$ since $\yb = \Sb(\xb)$ is obtained through the sigmoidal warping. Then, we can bound the value of the stationary condition using the Lipschitz continuity of $\partial_i f$ (\Cref{assumption:f_L_smooth}):
\begin{align}
\abs{\partial_if(\yb) +\lambda_i^*} 
= \abs{\partial_if(\yb) - \partial_i f(\yb^*)}
&\leq L_i\abs{y_i - y^*_i}
= \dfrac{L_is'(x_i)}{\sigma_i\abs{1-y_i-y^*_i}} \notag
\\
&=\dfrac{L_i\partial_i\ft_{\sigmab}(\xb)}{\partial_i f(\yb)\sigma_i\abs{1-y_i-y^*_i}} \notag
\\
&\leq\dfrac{L_i\delta_i}{\abs{\partial_i f(\yb)}\sigma_i\Delta_i},
\label{eq:kkt_err_bound_boundary}
\end{align}
using the definition of $\Delta_i$.
\end{proof}

The bound on the stationary condition, \cref{eq:kkt_err_bound_interior}, shows that decreasing the value of $\abs{\partial_i\ft_{\sigmab}(\xb)}$ can decrease the value of $\abs{\partial_i f(\yb)}$. However, it also shows that as $y_i$ approaches the boundary of $\Omegab$, the bound on $\abs{\partial_i f(\yb)}$ increases, as the denominator $y_i(1-y_i)$ will approach zero.
For example, if the goal of the optimization is to find an approximate stationary point to within a gradient tolerance of $\abs{\nabla f(\yb)}\le \delta$, but the stationary point $\yb^*$ is a distance of $0.1$ from the boundary, then \cref{eq:kkt_err_bound_interior} states that optimizing the objective $\ft_{\sigmab}$ to a tolerance of $\delta$ will result in a stationary condition satisfaction no worse than $12\frac{\delta}{\sigma}$.
Therefore, in order to satisfy the stationary condition to a tolerance $\delta$ for interior stationary points, $\sigma$ should be increased to at least $12$.
For a fixed value of $\delta$, increasing $\sigmab$ improves the first error bound. 

In \Cref{thm:stationary_condition} we see an analogous effect of $\sigmab$ on the error in the stationary condition relative to a \textit{boundary-lying} KKT point of \ref{eq:PROB}. \Cref{eq:kkt_err_bound_boundary} shows that the violation of the stationary condition $\abs{\partial_i f(\yb) + \lambda^*_i}$ relative to a KKT point $\yb^*$ is increased if $\abs{\partial_i f(\yb)}$ becomes small when $y_i$ approaches a boundary-lying entry of the KKT point. Again, however, if increased sufficiently, $\sigmab$ can counter these adverse effects.

Particularly when a minimum lies on or near the boundary of $\Omegab$ it is useful to increase $\sigmab$ to improve approximate satisfaction of the KKT conditions. Below we develop an algorithm that finds KKT points of \ref{eq:PROB} to a desired tolerance by iteratively increasing $\sigmab$.

\subsection{Iterative Updating of $\sigma$}
\label{sec:sigup}

While the sigmoidal warping does not map onto the boundary of $\Omegab$, the merit function $\ft_{\sigmab}$ can still be used to approximate any stationary point of \ref{eq:PROB} arbitrarily well. As \Cref{thm:stationary_condition} highlights, however,  resolving stationary points can be challenging because optimizing $\ft_{\sigmab}$ to a gradient tolerance $\delta$ will not necessarily find a point $\xb$ satisfying $\abs{\partial f(\Sb(\xb))}\le \delta$ or analogous KKT satisfaction. The gradient $\nabla \ft_{\sigmab}(\xb) = \sigmab \odot\Sb(\xb)\odot(\oneb-\Sb(\xb))\odot\nabla f(\xb)$ will become small when either the gradient of $f$ becomes small or $\Sb(\xb)$ approaches the boundary of $\Omegab$. \Cref{thm:stationary_condition} not only elucidates this behavior but also shows that the parameter $\sigmab$ can be used as a control to improve the satisfaction of the KKT conditions of \ref{eq:PROB}. Similar to the increase of the penalty parameter in penalty methods, convergence to a stationary point can benefit from iteratively increasing $\sigmab$. In \Cref{alg:sigma_update} we introduce a framework for solving \ref{eq:WPROB} that adaptively increases the entries of $\sigmab$ to guarantee convergence of a sequence of approximate stationary points of $\ft_{\sigmab}$ to the warping of a KKT point of \ref{eq:PROB}, whether on the interior or boundary of $\Omegab$. 
A key benefit of using this framework is that any unconstrained optimization routine can be used as a subproblem solver to find approximate stationary points of $\ft_{\sigmab}$; a convergence proof of \Cref{alg:sigma_update} when using gradient descent is provided in \Cref{thm:sigup_converges}. 

At the core of \Cref{alg:sigma_update} is an update rule, which defines a sequence $\{\sigmab_k\}$ that controls the domain warping, affects the sequence of iterates $\{\xb_k^*\}$ taken to the KKT point, and is ultimately responsible for the convergence properties of the algorithm. At each iteration, \Cref{alg:sigma_update} minimizes $\ft_{\sigmab_k}$ to a preset tolerance $\delta$ to find a point $\xb_k^*$ and warped point $\yb_k^* = \Sb(\xb_k^*)$, before increasing $\sigmab_k$ and reoptimizing. \Cref{thm:sigup_converges} shows that the only requirements to guarantee convergence when using gradient descent as a subproblem solver are for $\sigmab_k\to\infty$ and for the ratio of the smallest to the largest value of $\sigmab$ to be bounded below: $\min_{i,j}\{\sigma_{k,i},\sigma_{k,j}\}>\kappa$ for some fixed $\kappa>0$. Within these flexible requirements, the choice of update rule can significantly affect the rate of convergence. We later explore one update rule, \ref{eq:update_rule}, that achieves good practical performance when used in \Cref{alg:sigma_update}, and we bound the number of iterations of \Cref{alg:sigma_update} when using \ref{eq:update_rule} in \Cref{thm:sigup_num_iter}.

      \begin{algorithm}[H]
\footnotesize
\SetAlgoNlRelativeSize{-4}
\caption{Adaptive warping (AdaWarp)}
\label{alg:sigma_update}
\KwIn{Interior point $\yb_0 \in \interior(\Omegab)$, $\sigmab_0>\zerob$, tolerance $\delta>0$}

\KwResult{Approximate stationary point $\yb^*_k$}

 \For{$k=0,1,\ldots$}{
  Compute starting point $\xb_k = \Sb^{-1}(\yb_k)$ using $\sigmab=\sigmab_k$\;
  
  Obtain $\xb^*_k$ by approximately solving $\min_{\xb} \ft_{\sigmab_k}(\xb)$, starting from $\xb_k$, to a gradient tolerance $\|\nabla\ft_{\sigmab_k}(\xb_k^*)\| \le \delta$, and with $\ft_{\sigmab_k}(\xb_k^*)\le \ft_{\sigmab_k}(\xb_k)$\;
  
  Compute $\yb^*_k = \Sb(\xb^*_k)$ using $\sigmab=\sigmab_k$\;

  Select parameter $\sigmab_{k+1} > \sigmab_k$
  
  Update $\yb_{k+1} = \yb^*_k$\;
 }
\end{algorithm}

First note that by continuity of $f$ and $s$, and invertibility of $s$, there always exists a point $\xb_k^*$ that satisfies Step 3 of \Cref{alg:sigma_update}: that is, a point satisfying a gradient tolerance $\|\nabla\ft_{\sigmab_k}(\xb_k^*)\| \le \delta$ and nonincreasing function value $\ft_{\sigmab_k}(\xb_k^*)\le \ft_{\sigmab_k}(\xb_k)$. Explicitly, by \Cref{assumption:f_L_smooth} and \Cref{assumption:omega_cube} there is a KKT point $\yb^*$ of \ref{eq:PROB} with $f(\yb^*) \le f(\Sb(\xb_k))$. By continuity of $f$ a ball exists around $\yb^*$ such that all points within the ball and $\Omegab$ satisfy the requirements of Step 3 of \Cref{alg:sigma_update}. Thus, Step 3 of \Cref{alg:sigma_update} is viable. Moreover, the conditions in Step 3 can be satisfied by many gradient-based optimization routines.

\subsection{Convergence and Complexity of \Cref{alg:sigma_update}}
\label{sec:alg1_analysis}

In \Cref{thm:sigup_converges} we prove that every limit point of \Cref{alg:sigma_update} is indeed a KKT point of \ref{eq:PROB} if gradient descent is used as a subproblem solver, $\{\sigmab_k\}$ diverges to $\infty$, and the ratio of $\sigma_{k,i}/\sigma_{k,j}$ stays bounded for all $i,j,k$. Simple updates rules such as an exponential increase satisfy these requirements. However, rules that adaptively update the entries of $\sigmab_k$ based on the iterates $\xb_k^*$, such as \ref{eq:update_rule}, have the potential to perform much better because they can balance the conditioning of the objective in orthogonal directions based on the magnitude of $x^*_{k,j}$. We analyze the convergence rate of \Cref{alg:sigma_update} under \ref{eq:update_rule} in \Cref{thm:sigup_num_iter}. 

\begin{theorem}
\label{thm:sigup_converges}
Let $f$ and $\Omegab$ satisfy \Cref{assumption:f_L_smooth} and \Cref{assumption:omega_cube}. Let $\{\sigmab_k>\zerob\}\to\infty$ and let the ratio of the smallest to the largest value of $\sigmab_k$ be bounded below, that is, $\min_{i,j}\{\sigma_{k,i}/\sigma_{k,j}\} \ge \kappa>0$ for all $k$. Then, all limit points of \Cref{alg:sigma_update} are KKT points of \ref{eq:PROB} if gradient descent is used as a subproblem solver with a constant step size $\alpha_k = 1/\tilde{L}_k$, where $\tilde{L}_k$ is the Lipschitz constant of $\nabla \ft_{\sigmab_k}$.
\end{theorem}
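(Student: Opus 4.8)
The plan is to mimic the convergence analysis of a classical penalty method, with $\sigmab_k$ playing the role of the penalty parameter. Throughout, write $\yb_{k+1}=\yb_k^*$ and note that $\xb_k=\Sb^{-1}(\yb_k)$ and $\yb_{k+1}=\Sb(\xb_k^*)$ both use the warping $\sigmab_k$, so $\ft_{\sigmab_k}(\xb_k)=f(\yb_k)$, $\ft_{\sigmab_k}(\xb_k^*)=f(\yb_{k+1})$, and Step~3 of \Cref{alg:sigma_update} gives the monotonicity $f(\yb_{k+1})\le f(\yb_k)$. First I would fix a limit point: the iterates $\yb_k^*\in(0,1)^n$ lie in the compact cube $[0,1]^n$, so there is a subsequence $\yb_k^*\to\yb^*$ ($k\in\mathcal{K}$), and primal feasibility $\yb^*\in[0,1]^n$ is immediate. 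Partition the indices into the active set $I=\{i:y_i^*\in\{0,1\}\}$ and the inactive set $I^c=\{i:y_i^*\in(0,1)\}$. Since $f$ is continuous on the compact cube it is bounded below, so $\{f(\yb_k)\}$ converges to $f(\yb^*)$ and $f(\yb_k)-f(\yb_{k+1})\to0$.

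Next I would use that gradient descent is the subproblem solver. In iteration $k$ either at least one descent step is taken --- and then, since $\ft_{\sigmab_k}$ is $\tilde{L}_k$-smooth (\Cref{thm:lipschitz_consant_bound}) and $\alpha_k=1/\tilde{L}_k$, the descent lemma plus monotonicity of gradient descent give $\frac{1}{2\tilde{L}_k}\|\nabla\ft_{\sigmab_k}(\xb_k)\|^2\le\ft_{\sigmab_k}(\xb_k)-\ft_{\sigmab_k}(\xb_k^*)=f(\yb_k)-f(\yb_{k+1})$ --- or no step is taken and then $\|\nabla\ft_{\sigmab_k}(\xb_k)\|\le\delta$ by the stopping rule. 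In either case, using $\tilde{L}_k\le\frac12((\max_i\sigma_{k,i})^2\hat{L}+(\max_i\sigma_{k,i})L)$, the ratio hypothesis $\sigma_{k,i}\ge\kappa\max_j\sigma_{k,j}$, and $\sigmab_k\to\infty$, one obtains $\|\nabla\ft_{\sigmab_k}(\xb_k)\|/\max_i\sigma_{k,i}\to0$, hence $y_{k,i}(1-y_{k,i})\abs{\partial_if(\yb_k)}\to0$ for every $i$. For $i\in I^c$ the cleanest route is the termination condition itself: $\abs{\partial_i\ft_{\sigmab_k}(\xb_k^*)}=\sigma_{k,i}y_{k,i}^*(1-y_{k,i}^*)\abs{\partial_if(\yb_k^*)}\le\delta$, and since $\sigma_{k,i}\to\infty$ while $y_{k,i}^*(1-y_{k,i}^*)\to y_i^*(1-y_i^*)>0$ along $\mathcal{K}$, we get $\partial_if(\yb_k^*)\to0$, hence $\partial_if(\yb^*)=0$. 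Set $\lambda_i^*=0$ for $i\in I^c$ and $\lambda_i^*=-\partial_if(\yb^*)$ for $i\in I$; this yields the stationarity condition $\nabla f(\yb^*)+\lambdab^*=\zerob$ and complementary slackness.

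The only remaining KKT condition is dual feasibility, $\sign(\lambda_i^*)\in\{\sign(2y_i^*-1),0\}$ for $i\in I$ --- equivalently, that $\partial_if(\yb^*)$ never points strictly into $\Omegab$ at an active bound --- and this is the step I expect to be the main obstacle. The natural argument is by contradiction: suppose, say, $y_i^*=0$ and $\partial_if(\yb^*)=-c<0$ (the case $y_i^*=1$ is symmetric), and pick a neighborhood $U$ of $\yb^*$ in $\Omegab$ on which $\partial_if<-c/2$ and $y_i<\frac12$. On $U$ the $i$th merit-gradient component $\partial_i\ft_{\sigmab_k}=\sigma_{k,i}\,s(x_{k,i})(1-s(x_{k,i}))\,\partial_if$ is strictly negative, so while a gradient-descent iterate stays in $U$ its coordinate $x_{k,i}$, and hence $y_{k,i}$, strictly increases: gradient descent systematically pushes this coordinate away from the active bound. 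Moreover, once $\max_i\sigma_{k,i}$ is large enough the merit gradient at points near $\yb^*$ with $y_i$ pinned close to $0$ exceeds $\delta$ (it is of order $\sigma_{k,i}y_i$), so gradient descent is forced to move; combined with the monotone decrease $f(\yb_{k+1})\le f(\yb_k)\downarrow f(\yb^*)$ --- so that every iterate and every gradient-descent iterate has objective value $\ge f(\yb^*)$, while, because $\yb^*$ is not a local minimizer, feasible points arbitrarily close to $\yb^*$ have value $<f(\yb^*)$ --- this should show $y_{k,i}$ cannot return arbitrarily close to $0$, contradicting $y_i^*=0$. The delicate technical point, and the one requiring the most care, is controlling a whole gradient-descent run: because a single run can drift a bounded-but-not-necessarily-small distance, one cannot simply assert the trajectory stays in $U$; the argument must instead combine the monotonicity of $y_{k,i}$ while in $U$, the termination bound $y_{k,i}^*\le C\delta/\sigma_{k,i}$ valid when $\yb_k^*\in U$, and the across-iteration monotone decrease of $f$ (which confines each run to the shrinking sublevel set $\{f\le f(\yb_k)\}$) to close the contradiction.

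Finally, the pair $(\yb^*,\lambdab^*)$ then satisfies primal feasibility, stationarity, complementary slackness, and dual feasibility, so $\yb^*$ is a KKT point of \ref{eq:PROB} by the KKT characterization (\Cref{thm:kkt} in Appendix~\ref{sec:background}). Since $\yb^*$ was an arbitrary limit point of \Cref{alg:sigma_update}, every limit point is a KKT point of \ref{eq:PROB}.
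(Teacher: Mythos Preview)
Your overall plan---split indices into inactive and active, use the subproblem termination bound for the inactive set, and argue dual feasibility by contradiction on the active set---is sound, and your treatment of the inactive set is in fact cleaner than the paper's: you go straight from $\sigma_{k,i}\,y_{k,i}^*(1-y_{k,i}^*)\,\abs{\partial_if(\yb_k^*)}\le\delta$ and $\sigma_{k,i}\to\infty$ to $\partial_if(\yb^*)=0$, with no intermediate machinery.

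The substantive difference, and the idea that dissolves the ``delicate technical point'' you correctly flag, is that the paper does \emph{not} reason about the outer iterates $\{\yb_k^*\}$ alone. It concatenates \emph{every} inner gradient-descent iterate across all outer iterations into a single sequence $\{\hat{\yb}_l\}$ and, using \Cref{thm:lipschitz_consant_bound} together with the ratio bound $\kappa$, shows that consecutive elements satisfy a sufficient-decrease inequality that is \emph{independent of $\sigmab$}:
\[
f(\hat{\yb}_{l+1}) \;\le\; f(\hat{\yb}_l) \;-\; \frac{\kappa^2}{C}\,\bigl\|\hat{\yb}_l\odot(\oneb-\hat{\yb}_l)\odot\nabla f(\hat{\yb}_l)\bigr\|^2,
\]
valid once $\sigmab_k\ge\oneb$. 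This buys two things. First, boundedness of $f$ forces $\hat{\yb}_l\odot(\oneb-\hat{\yb}_l)\odot\nabla f(\hat{\yb}_l)\to\zerob$ along the \emph{entire} concatenated sequence, so every limit point $\zb$ of $\{\hat{\yb}_l\}$ (hence of its subsequence $\{\yb_k^*\}$) automatically has, in each component, either $z_j\in\{0,1\}$ or $\partial_jf(\zb)=0$. Second---and this is what removes your obstacle---the dual-feasibility contradiction can be run \emph{one gradient step at a time} on the concatenated sequence: once an iterate lands in the neighborhood where $\partial_jf$ has the wrong sign, the very next step moves $\hat{y}_{\cdot,j}$ strictly away from the boundary, and induction on successive single steps (not on whole subproblem runs) produces the contradiction. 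Because the $\sigmab$-free decrease stitches runs together seamlessly across the updates $\sigmab_k\to\sigmab_{k+1}$, you never need to control an entire gradient-descent trajectory, nor invoke the termination bound or the sublevel-set confinement you proposed.
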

\begin{proof}
  See Appendix~\ref{sec:appendix_proofs}.
\end{proof}

Although the statement of \Cref{thm:sigup_converges} holds only for gradient descent with a constant step size, it can readily be extended to the case with an adaptive step size. Furthermore, in the numerical experiments in \Cref{sec:numerical_exp}, we use L-BFGS in place of gradient descent because of its improved practical performance. 
\Cref{thm:sigup_converges} provides conditions under which \Cref{alg:sigma_update} generates subsequences that converge to a KKT point of \ref{eq:PROB}. The rate of convergence, however, will depend on the rule for increasing $\sigmab_k$.

We propose a rule that adaptively updates entries of $\sigmab_k$ based on the array of distances $\etab_k$ from the current iterate $\yb_k^*$ to $\boundary(\Omegab)$; that is, $\eta_{k,i} = \min\{y^*_{k,i},1-y^*_{k,i}\}$. Our rule also employs an exponential increase parameter $\gamma \ge 1$ that ensures that all components of $\sigmab$ increase at a baseline desired rate. Moreover, the rule ensures that the ratio of the smallest to the largest elements of $\sigmab_{k+1}$ is lower bounded by a preset value of $\kappa$. Entries of $\sigmab_k$ increase by the rule $\sigmab_{k+1} = \frac{\gamma}{\sqrt{\etab_{k}}}\odot\sigmab_k$ unless this value violates the lower bound on the ratio of the smallest to the largest elements of $\sigmab_{k+1}$. In this case, entries of $\sigmab_{k+1}$ that violate the ratio are updated by $\kappa^{-1}\min_{i}\{\frac{\gamma}{\sqrt{\eta_{k,i}}}\sigma_{k,i}\}$. Explicitly, under the update rule, $\sigmab_k$ is increased at each iteration by
\begin{equation}
    \sigma_{k+1,j} =
    \begin{cases}
        \dfrac{\gamma}{\sqrt{\eta_{k,j}}} \sigma_{k,j}  & \text{if } \dfrac{\sigma_{k,j}}{\sqrt{\eta_{k,j}}}  \le \kappa^{-1}\min_{i}\{\frac{\sigma_{k,i}}{\sqrt{\eta_{k,i}}}\}
        \\
        \kappa^{-1}\min_{i}\{\frac{\gamma}{\sqrt{\eta_{k,i}}}\sigma_{k,i}\}  & \text{otherwise}.
    \end{cases}
    \tag{UPRULE}
    \label{eq:update_rule}
\end{equation}
To ensure that limit points of \Cref{alg:sigma_update} are KKT points of \ref{eq:PROB}, \ref{eq:update_rule} guarantees that the ratio of the smallest to the largest entry of $\sigmab_k$ be bounded below by $\kappa>0$. This leads to a complicated expression to define the rule. If \Cref{alg:sigma_update} is set to terminate after reaching a finite stopping tolerance, such as an $\epsilon$-stationary tolerance for some fixed $\epsilon >0$ (see \cref{def:eps_stationary}), then the bound on the ratio of the smallest to the largest entries of $\sigmab_k$ will be implicitly enforced, allowing \ref{eq:update_rule} to be simplified. In order for \Cref{alg:sigma_update} to reach a finite $\epsilon$-stationary tolerance, $\sigmab_k$ does not need to diverge, and so the ratio of the smallest to the largest entry of $\sigmab_k$ will always be bounded below by some $\kappa$. Moreover, in this scenario there will exist $\kappa$ such that $\dfrac{\sigma_{k,j}}{\sqrt{\eta_{k,j}}}  \le \kappa^{-1}\min_{i}\{\frac{\sigma_{k,i}}{\sqrt{\eta_{k,i}}}\}$ is always satisfied during the course of \Cref{alg:sigma_update}, simplifying \ref{eq:update_rule} to $\sigmab_{k+1} = \frac{\gamma}{\sqrt{\etab_{k}}}\odot\sigmab_k$. In the numerical experiments, we use precisely this simplified update rule.

Other simple update rules, such as an exponential increase $\sigmab_{k+1} = \gamma \sigmab_k$, could be used in place of \ref{eq:update_rule}. However, we find that the proposed rule performs well in practice because of its ability to stabilize the ill-conditioning that appears as some entries of $\yb_k^*$ approach the boundary and others remain interior. 

While \ref{eq:update_rule} guarantees under \Cref{thm:sigup_converges} that limit points of \Cref{alg:sigma_update} are KKT points of \ref{eq:PROB}, we have yet to develop a bound on the number of iterations under this update rule to converge to a desired stopping tolerance. To arrive at this result, in \Cref{thm:sigup_num_iter}, we make the following mild assumption about the performance of \Cref{alg:sigma_update} under the update rule.

\begin{assumption}
There exists $\xi \in (0,1)$ so that $\Delta_{k,j} = \min_j\abs{1-y_{k,j}^* - z_j^*} \geq \xi$ for any limit point $\zb^*$ on the boundary of $\Omegab$ produced by the sequence of iterates $\{\yb_k^*\}$ from \Cref{alg:sigma_update} using \ref{eq:update_rule} to update $\sigmab_k$. 
\label{assumption:same_side}
\end{assumption}

The assumption states that the sequence produced by \Cref{alg:sigma_update} under \ref{eq:update_rule} can never approach the boundary opposite limit points of the sequence. This mandates that limit points of the sequence cannot be on opposing boundaries, and it bounds the distance between limit points. This assumption is relatively mild, since in practice we expect that the sequence produced by the algorithm will cluster near a single stationary point in the domain.

\Cref{assumption:same_side} provides us with the foundation to count the number of iterations required for \Cref{alg:sigma_update} to reach a desired stopping criterion. For this bound-constrained optimization problem, a reasonable stopping criterion for the solution $\yb_k^*$ of \Cref{alg:sigma_update} is $\epsilon$-stationarity (\Cref{def:eps_stationary} in Appendix~\ref{sec:background}). Alternatively, another common measure, the norm of the projected gradient $\|\pib(\yb_k^*-\nabla f(\yb_k^*))-\yb_k^*\| $, could be used. In \Cref{thm:sigup_num_iter} we compute a bound on the number of iterations required for \Cref{alg:sigma_update} to terminate to a desired $\epsilon$-stationarity tolerance.

\begin{theorem}
\label{thm:sigup_num_iter}
Let tolerances $\epsilon,\delta>0$ be given, along with \ref{eq:update_rule}'s increase parameter $\gamma \ge 1$, and $\sigmab_0 > 0$, and let $\yb_0$ be a feasible initial point to \ref{eq:PROB} under Assumptions~\ref{assumption:f_L_smooth} and \ref{assumption:omega_cube}. Let $\overline{L} = \max_{j}\{L_j\}$ be the largest Lipschitz constant of $\partial_j f$, and let $\xi$ be defined as in \Cref{assumption:same_side}.

Let Assumption~\ref{assumption:same_side} hold, and suppose that \Cref{alg:sigma_update} is run with the update rule \ref{eq:update_rule} and gradient descent as a subproblem solver with a $1/\tilde{L}_k$ step size at the $k$th iteration of \Cref{alg:sigma_update}, where $\tilde{L}_k$ is the Lipschitz constant of $\nabla \ft_{\sigmab_k}$. If there exist $\nu >0$ such that the sequence $\{y_{k,j}^*\}$ stays uniformly bounded at least a distance $\nu$ from the boundary in at least one component $j$, \Cref{alg:sigma_update} will find an $\epsilon$-stationary point in at most
\begin{equation*}
    N = \max\left\{\log\left(\frac{\delta}{\epsilon \nu (1-\nu )}\right)/\log(\sqrt{2}\gamma), \log\left(\frac{\overline{L}\delta}{\xi \epsilon^2} \right)/\log(\sqrt{2}\gamma)\right\}
\end{equation*}
iterations. If a $\nu$ exists for all components $j$, then the bound reduces to
\begin{equation*}
    N = \log\left(\frac{\delta}{\epsilon \nu (1-\nu )}\right)/\log(\sqrt{2}\gamma) .
\end{equation*}
If no $\nu$ exists, then \Cref{alg:sigma_update} finds an $\epsilon$-stationary point within $N$ iterations, where
\begin{equation*}
    N =  \log\left(\frac{\overline{L}\delta}{\xi \epsilon^2} \right)/\log(\sqrt{2}\gamma).
\end{equation*}
\end{theorem}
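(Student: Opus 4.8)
The plan is to use the error bounds from \Cref{thm:stationary_condition} to convert the per-subproblem gradient tolerance $\delta$ into a bound on the KKT violation of $\yb_k^*$, and then to count how many iterations are needed until that KKT violation drops below $\epsilon$. The key observation is that under the simplified update rule $\sigmab_{k+1} = \frac{\gamma}{\sqrt{\etab_k}}\odot\sigmab_k$ (valid here because, per the discussion after \ref{eq:update_rule}, termination at a finite $\epsilon$-stationary tolerance implicitly enforces the $\kappa$ ratio bound), the quantity $\sigma_{k,j}\,\eta_{k,j}$ governs the interior-type bound \cref{eq:kkt_err_bound_interior}, while $\sigma_{k,j}\Delta_{k,j}$ governs the boundary-type bound \cref{eq:kkt_err_bound_boundary}. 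First I would track the growth of $\sigma_{k,j}\sqrt{\eta_{k,j}}$ along the iterations: from the update rule, $\sigma_{k+1,j}\sqrt{\eta_{k,j}} = \gamma\,\sigma_{k,j}$, and then using $\eta_{k,j}\le 1/2$ one gets a multiplicative increase of at least $\sqrt{2}\gamma$ per iteration in the relevant products, which is exactly where the $\log(\sqrt{2}\gamma)$ denominator in $N$ comes from.

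Next I would split into the two regimes foreshadowed in the statement. For a component $j$ whose iterates stay at distance at least $\nu$ from the boundary (so $\eta_{k,j}\ge\nu$ and, since $\eta_{k,j}\le 1/2$, also $y_{k,j}^*(1-y_{k,j}^*)\ge\nu(1-\nu)$), the interior bound \cref{eq:kkt_err_bound_interior} with $\delta_i=\delta$ gives $\abs{\partial_j f(\yb_k^*)}\le \delta/(\sigma_{k,j}\,\nu(1-\nu))$; since $\sigma_{k,j}$ grows at least geometrically with ratio $\ge\sqrt{2}\gamma$ (again from the update rule and $\eta_{k,j}\le 1/2$), solving $\delta/(\sigma_{k,j}\nu(1-\nu))\le\epsilon$ for $k$ yields the first term $\log\!\big(\delta/(\epsilon\nu(1-\nu))\big)/\log(\sqrt{2}\gamma)$. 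For a component $j$ whose iterates approach the boundary, I would instead invoke the boundary bound \cref{eq:kkt_err_bound_boundary}: the KKT-stationarity residual $\abs{\partial_j f(\yb_k^*)+\lambda_j^*}$ is at most $L_j\delta/(\abs{\partial_j f(\yb_k^*)}\sigma_{k,j}\Delta_{k,j})$, and here I use \Cref{assumption:same_side} to replace $\Delta_{k,j}$ by the uniform lower bound $\xi$, bound $L_j$ by $\overline L$, and — this is the subtle point — handle the factor $\abs{\partial_j f(\yb_k^*)}$ in the denominator. Since the stationarity residual is being driven below $\epsilon$ and $\lambda_j^*$ has the correct sign, one can argue $\abs{\partial_j f(\yb_k^*)}\gtrsim\epsilon$ once the residual is below $\epsilon$ (otherwise the point is already $\epsilon$-stationary in that component), so the bound becomes $\overline L\delta/(\epsilon\,\xi\,\sigma_{k,j})$, and again geometric growth of $\sigma_{k,j}$ gives the second term $\log\!\big(\overline L\delta/(\xi\epsilon^2)\big)/\log(\sqrt{2}\gamma)$. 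Taking the max over the two types of components (the interior-type bound applying to any component that never nears the boundary, the boundary-type bound to the rest) produces the stated $N$; the two corner cases follow by dropping whichever term is vacuous.

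The main obstacle I anticipate is the bookkeeping that ties the abstract $\epsilon$-stationarity definition (\Cref{def:eps_stationary}) to the componentwise bounds of \Cref{thm:stationary_condition}: one must show that when every component's relevant residual is below the appropriate threshold, the point genuinely satisfies the $\epsilon$-stationarity criterion — this requires care about how interior components (where the bound is on $\abs{\partial_j f}$ directly) and boundary components (where the bound is on $\abs{\partial_j f + \lambda_j^*}$ with a Lagrange-multiplier sign condition) combine, and about the $\abs{\partial_j f(\yb_k^*)}>0$ hypothesis in \Cref{thm:stationary_condition}. A secondary technical point is justifying that $\sigma_{k,j}$ really does grow at rate at least $\sqrt{2}\gamma$ for \emph{every} component uniformly — for boundary-approaching components this is immediate from $\eta_{k,j}\to 0$, but for the borderline components one leans on $\eta_{k,j}\le 1/2$ together with the ratio-stabilization built into \ref{eq:update_rule}. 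Once these are pinned down, the counting argument itself is a routine geometric-series inversion.
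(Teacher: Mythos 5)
Your proposal is correct and follows essentially the same route as the paper's proof: geometric growth of $\sigma_{k,j}$ at rate at least $\sqrt{2}\gamma$ from the simplified update rule and $\eta_{k,j}\le 1/2$, the interior bound \cref{eq:kkt_err_bound_interior} with $y^*_{k,j}(1-y^*_{k,j})\ge\nu(1-\nu)$ for components bounded away from the boundary, and for boundary components the bound \cref{eq:kkt_err_bound_boundary} with $\Delta_{k,j}\ge\xi$ resolved by the same dichotomy (either $\abs{\partial_j f(\yb_k^*)}\le\epsilon$, which already gives componentwise $\epsilon$-stationarity, or it exceeds $\epsilon$ and the residual bound closes). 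The inversion of these bounds yields exactly the stated $N$, matching the paper.
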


\begin{proof}
  See Appendix~\ref{sec:appendix_proofs}.
\end{proof}

While \Cref{thm:sigup_num_iter} bounds the number of iterations to $\epsilon$-stationarity, this bound can at times be uninformative because of the limits of finite computational precision. For instance, when converging to an interior stationary point at a distance $\nu = 10^{-8}$ from the boundary, with $\gamma=1$ and $\delta = \eps$, the bound shows that the algorithm must take at most $N=54$ iterations, long before which the $\sigmab$ parameter would have reached a numerical overflow. We find that this is not an issue because \Cref{alg:sigma_update} can typically achieve high orders of accuracy much sooner than the bound predicts. \Cref{thm:sigup_num_iter} not only bounds the number of iterations required to reach a desired KKT violation but also gives insight into how to select the $\delta$ parameter that governs the accuracy of the subproblem solve: a larger $\delta$ results in more iterations. To cancel out the effects of $\delta$ and $\epsilon$ in the bound, we can set $\delta = \epsilon$ for the ``interior-minimum'' case or $\delta = \epsilon^2$ when converging to a minimum on the boundary.

Empirically we find that the choice of $\sigmab_0$ greatly affects the path taken by \Cref{alg:sigma_update} and its rate of convergence.
\begin{figure}[tbh!]
\includegraphics[scale=0.3]{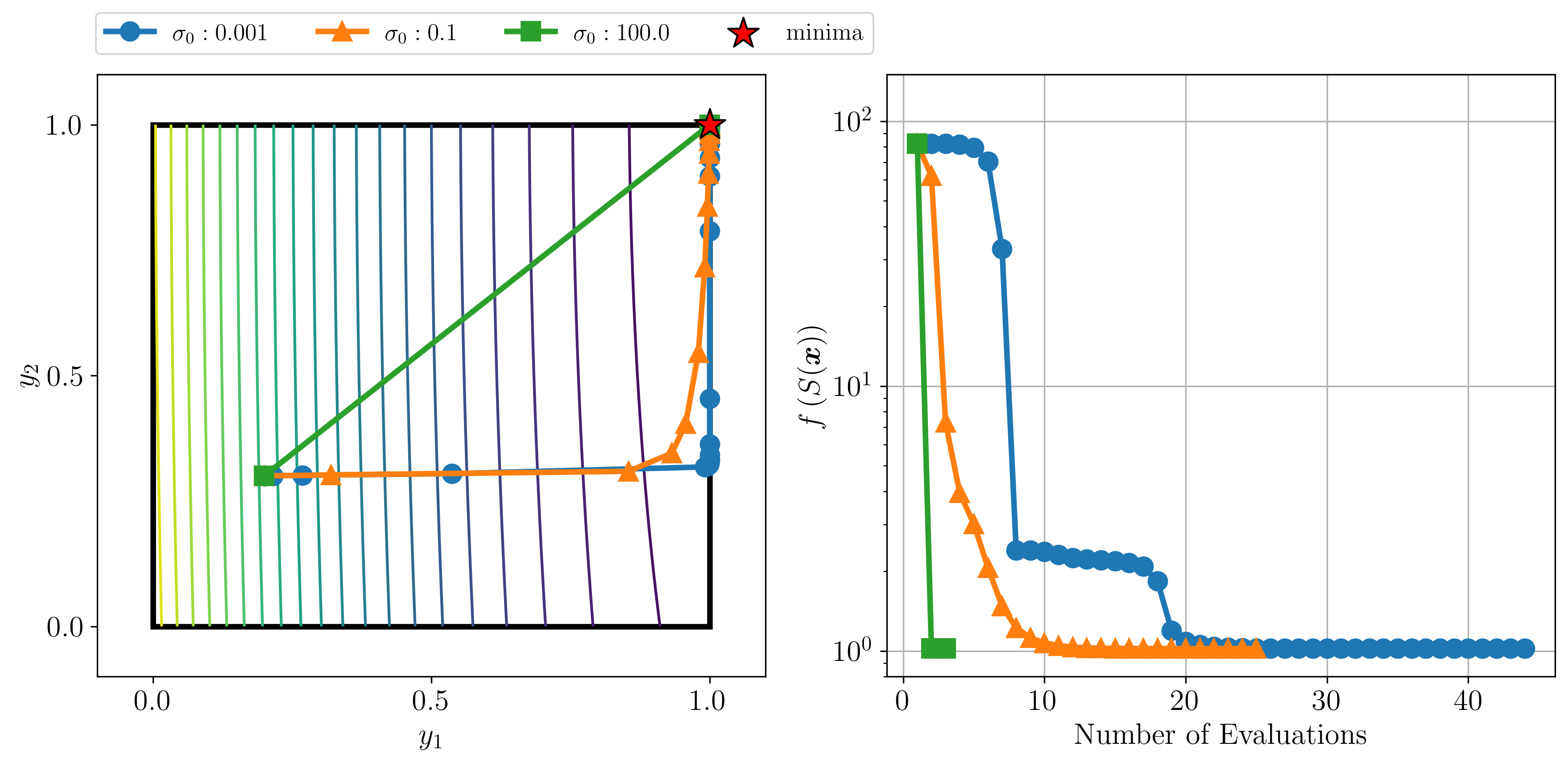}
\centering
\caption{Effect of $\sigmab_0$ on \Cref{alg:sigma_update} when minimizing $f$, an axis-aligned convex quadratic with minima at $(1.1,1.1)$ and Hessian eigenvalues $100,2$, over the unit cube. (Left) The three choices of $\sigmab_0$ lead to starkly different paths across the constrained domain toward the constrained optima at $(1,1)$. Approaching the boundary prematurely, such as by $\sigmab_0 = 0.001$, slows convergence of the method and requires many function evaluations to converge. On the other hand, motion along a \textit{central path}, such as the path taken by $\sigmab_0=100$, approaches the minima efficiently. (Right) The corresponding objective value over the three optimizations. All runs were terminated when the norm of the gradient was less than $10^{-6}$. }
\label{fig:effect_of_sigma0}
\end{figure}
Iterates that rapidly approach the boundary far from a boundary minimum may converge slowly, whereas iterations that take a more \textit{central path} tend to preserve a fast rate of approach; see \Cref{fig:effect_of_sigma0}. Mathematically, it is not trivial to predict the value of $\sigmab_0$ that will lead to the best convergence. In practice we suggest testing a few orders of magnitude such as $\{10^{-3},10^{-2},10^{-1},1,10\}$. In the numerical experiments, we find that a small $\sigmab_0=10^{-3}$ value tends to perform well on a CUTEst test problem set~\cite{Gould2014} when using \Cref{alg:sigma_update} and that $\sigmab=\oneb$ performs well when $\sigmab$ is not updated at all. As discussed in Appendix~\ref{sec:sigmoidal_gradient_steps}, setting $\sigmab_0 = \left(\Sb(\xb)\odot(\oneb-\Sb(\xb))\right)^{-1}$ will nullify the effect that the sigmoidal warping has on the first gradient step. While this is a principled selection, in practice we find that using a well-chosen constant $\sigmab_0$ is often superior. 

Additional algorithmic considerations are discussed in Appendix~\ref{app:alg_recs}.

\section{Related Optimization Work}
\label{sec:related}

Because of the simple nature of bound-constrained decision sets, many optimization routines~\cite{zhu1997LBFGSB,Powell2009a,Bertsekas1982,Planteng2009,LeDigabel:2011} have been adapted to handle the constraints directly by projecting iterates onto the decision set. The projection operation does occasionally posses other names such as \say{snap to boundary} \cite{LeDigabel:2011}. One such state-of-the-art routine, L-BFGS-B, leverages an approximate second-order model to rapidly resolve minima. We compare \Cref{alg:sigma_update} against an implementation of L-BFGS-B in the numerical experiments, with favorable performance. A sophisticated class of second-order interior-point methods~\cite{polik2010IPM,renegar2001IPM,nocedal2006no,wachter2006iip} takes the algorithmic developments a step further by leveraging a barrier function to ensure that iterates of the algorithm remain strictly on the interior of the domain. 

Penalty function-based methods~\cite{nocedal2006no,Fletcher} have also been used to solve nonlinear constrained optimization problems. The popular quadratic penalty approach allows for constrained optimization problems to be reformulated as a sequence of smooth unconstrained optimization problems, the solutions of which can converge to solutions of the original constrained problem. The quadratic penalty approach, however, suffers from poor conditioning because of the requirement that the penalty parameter be increased to infinity in order to ensure convergence to a feasible point. To ameliorate this, one can use an exact penalty approach or augmented Lagrangian methods~\cite{nocedal2006no,Bertsekas1982}, which apply a penalty term to the Lagrangian rather than the objective. Including an explicit estimate of the Lagrange multiplier improves convergence and conditioning of the sequence of unconstrained problems.

Penalty and augmented Lagrangian methods, however, cannot solve optimization problems with general unrelaxable constraints. Penalty and augmented Lagrangian methods leverage the fact that the objective can be evaluated outside the feasible region, and thus they cannot solve problems with unrelaxable constraints. Some augmented Lagrangian methods \cite{conn2013lancelot}, however, can solve bound-constrained problems through the use of specialized solvers because they explicitly solve bound-constrained subproblems rather than include bound constraints in the Lagrangian. 

Recent work has developed algorithmic approaches for handling unrelaxable constraints by the extreme barrier approach~\cite{Gratton2014,Audet2009}, including mesh adaptive direct search algorithms~\cite{Audet06mads}. A further algorithmic approach by Hough and Roberts~\cite{Hough2021} leverages the projection operator to handle unrelaxable constraints within a trust-region method. A thorough discussion of trust-region algorithms for problems with unrelaxable constraints is given in~\cite{Hallock2021PhD}.

Recently, Galvan et al.~\cite{Galvan2021} developed a general theory for projection-based penalty functions applied to optimization problems with unrelaxable convex decision sets. The projection-based penalty approach formulates a merit function $\ft_{\pib}$ using the projection $\pib(\xb)$ of $\xb$ onto $\Omegab$ and the distance $d(\xb)$ to $\Omegab$, \ref{eq:PPM}, which can be used to exactly find the Clarke stationary points~\cite{Galvan2021, Clarkebook} of the original problem. While PPM enjoys wide applicability, the composition with the projection operator induces nonsmoothness in the merit function, which poses a practical problem for numerical optimization routines. In \Cref{sec:merit_connection_maps} we looked at the projection operator as one specific domain warping, and in \Cref{sec:numerical_exp} we numerically investigate the performance of this method.

\section{Numerical Experiments}
\label{sec:numerical_exp}

We now compare the performance of \Cref{alg:sigma_update} against a state-of-the-art bound-constrained solver L-BFGS-B~\cite{zhu1997LBFGSB}, to approximate an upper bound on the performance of \Cref{alg:sigma_update}, and against the projected penalty approach from~\cite{Galvan2021}, to benchmark performance against a nonsmooth penalty reformulation of \ref{eq:PROB}. To be clear, our goal is not to show that \Cref{alg:sigma_update} is a dramatic improvement over state-of-the-art bound-constrained solvers, such as L-BFGS-B, but that a straightforward method running on an unfailingly simple modification of the objective can be used to solve optimization problems with unrelaxable bound constraints to a high accuracy and with reasonable efficiency. The code and data generated during the current study are available from the corresponding author on request.

We use data profiles~\cite{JJMSMW09} to measure the fraction of problems that an algorithm can solve to a given accuracy in a given number of function evaluations. For a problem set $\cP$ and a tolerance $\tau$, the data profile for an algorithm $a$ is 
\begin{equation*}
    d_a(\alpha) = \frac{1}{\abs{P}}\text{size}\left\{p\in P \,\bigg\vert\, \frac{t_{p,a}}{n+1}\le \alpha \right\},
\end{equation*}
where $t_{p,a}$ is the number of function evaluations taken by the algorithm to solve problem $p\in \cP$ to accuracy $\tau$.

We measure ``solving a problem'' in terms of relative KKT tolerance, which we define to be the value of the stationary condition at the given point, normalized to the norm of the objective's gradient at the nominal starting point $\yb_0$. That is, $\yb$ satisfies a relative KKT tolerance of $\tau$ if $\yb$ is $\epsilon$-stationary and $\frac{\epsilon}{\|\nabla f(\yb_0)\|} \leq \tau$.
Thus, $t_{p,a}$ is the first iteration $k$ where $\yb_k$ is $\tau\|\nabla f(\yb_0)\|$-stationary. In \Cref{fig:data_profile} we consider $\tau \in \{10^{-2},10^{-4}\}$.

For testing, we use the set of all CUTEst problems~\cite{Gould2014} with dimension $3 \le n \le 1000$, finite bound constraints with $l_i < u_i$, and no other additional constraints; see Appendix~\ref{appendix:cutest_problems} for a list of problem names and attributes. For problems with bounds $[\lb,\ub] \neq [0,1]^n$, we use the augmented domain warpings $\Ab(\Phib(\xb))$ where $\Ab$ is the linear map from the unit cube to the bounds $\lb,\ub$: $\Ab(\yb) := \yb\odot(\ub-\lb) + \lb$. Note that this mapping has no effect on the analysis presented in \Cref{sec:sigmoidal_connection}, since $f$ can be redefined as $f(\Ab(\cdot))$.

The set of algorithms we build data profiles for are the NLopt implementation~\cite{Johnson2019} of L-BFGS-B applied to \ref{eq:PROB}, a nonsmooth quasi-Newton method~\cite{keskar2019NQN} applied to the \ref{eq:PPM} formulation from~\cite{Galvan2021}, %
\Cref{alg:sigma_update} for updating the $\sigmab$ parameter under \ref{eq:update_rule} and using L-BFGS as a subproblem solver, and NLopt's implementation of BFGS~\cite{nocedal2006no} applied to $\ft_{\sigmab}(\xb)$ using a constant value of $\sigmab$. 

At each iteration the nonsmooth quasi-Newton method applied to PPM  is supplied with a subgradient~\cite{bagirov2014book} direction $\db_k$.
If $\xb_k$ is on the interior of $\Omegab$, the subgradient $\db_k$ is the gradient $\nabla f(\xb_k)$; if $\xb_k$ is a boundary point, then $\db_k = -(\pib(\xb_k -\nabla f(\xb_k))-\xb_k)$ is the negative projected gradient; if $\xb_k$ is exterior to $\Omegab$, then $\db_k = D\pib(\xb_k)\nabla f(\pib(\xb_k)) + \frac{\xb_k - \pib(\xb_k)}{\|\xb_k - \pib(\xb_k)\|}$, where the  second term is a normal direction to $\Omegab$ at $\pib(\xb_k)$ and where the diagonal generalized Jacobian~\cite{Clarkebook} matrix $D\pib(\xb)$ is nonzero only in components $x_i$ equal to a bound constraint. While \cite{Galvan2021} intended the variables of \ref{eq:PPM} to be $\xb_k\in\Reals^n$, these points may violate the unrelaxable constraints. Thus we opt to create the data profiles in \Cref{fig:data_profile} and~\cite{onlinesupplement} using the projected iterates $\pib(\xb_k)$ instead.

\begin{figure}[tbh!]
\includegraphics[scale=0.22]{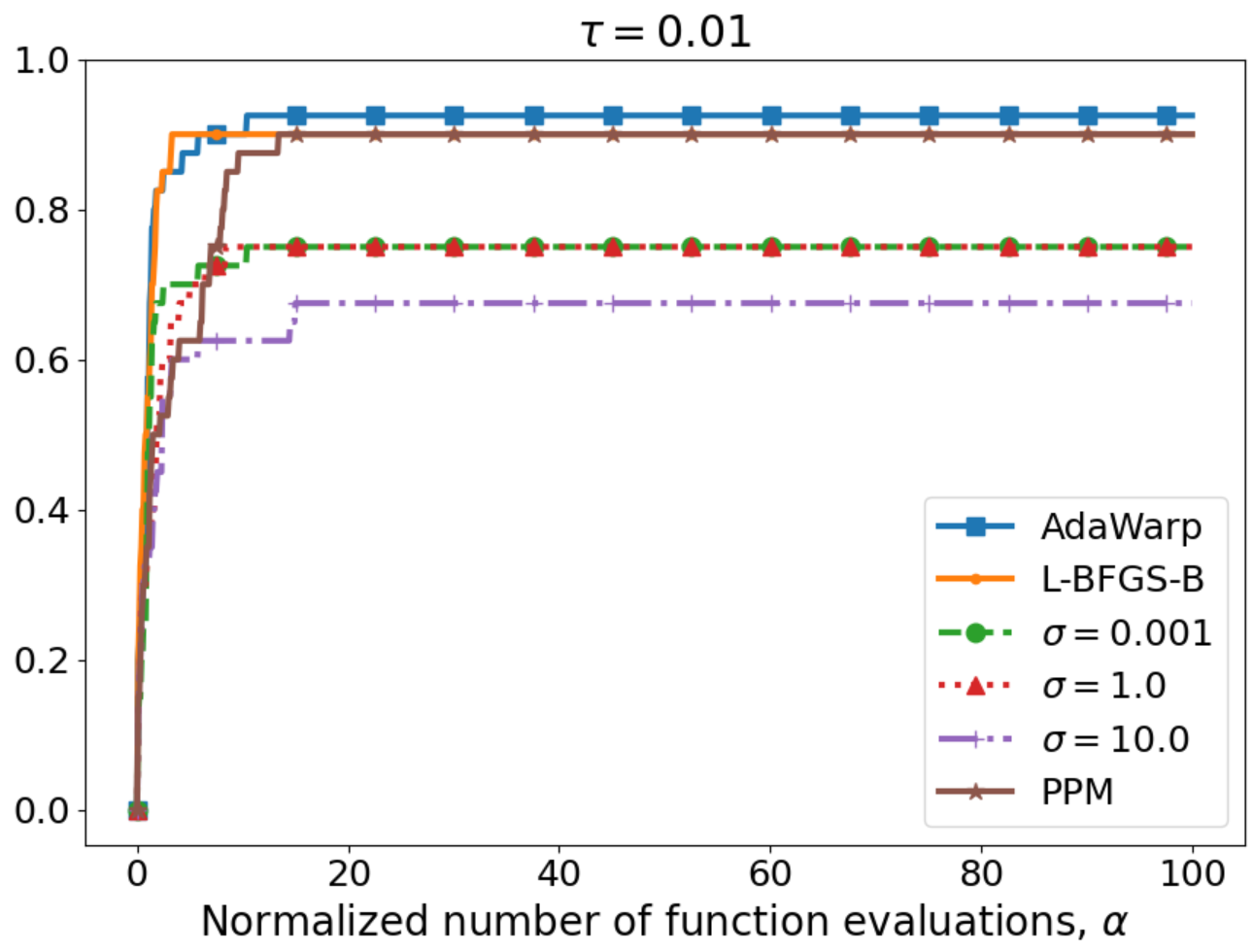}
\includegraphics[scale=0.22]{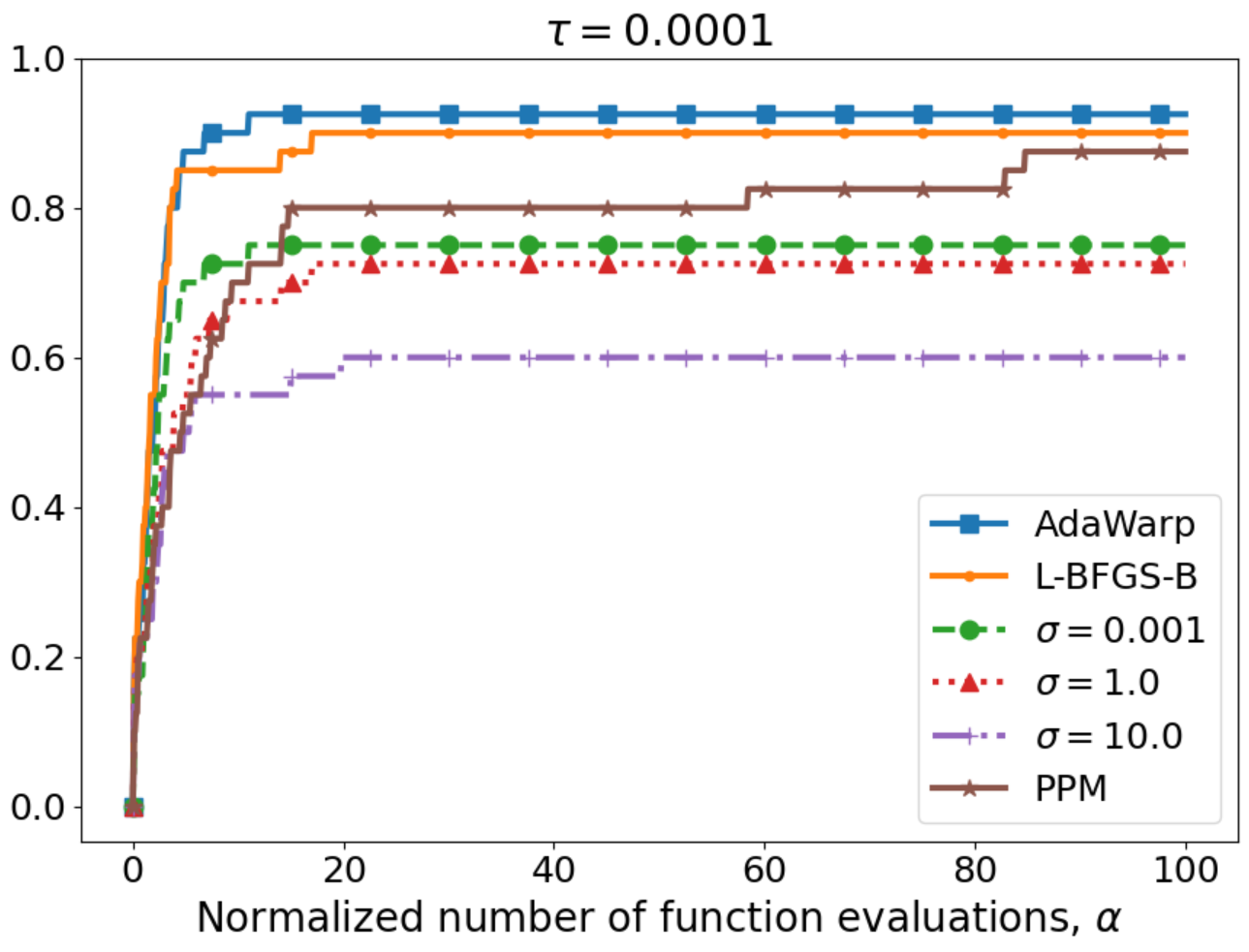}
\centering
\caption{Data profile for a relative KKT tolerance of $\tau=10^{-2}$ (left) and $\tau=10^{-4}$ (right) for L-BFGS-B, \Cref{alg:sigma_update} under \ref{eq:update_rule} and $\sigmab_0=10^{-3}$ (labeled AdaWarp), BFGS optimizing $\ft_{\sigmab}$ with a fixed $\sigmab$ (labeled $
\sigmab=0.001,1.0,10.0$), and the projection-based penalty method from~\cite{Galvan2021} (labeled PPM). When solving to a high accuracy, $10^{-4}$, L-BFGS-B and \Cref{alg:sigma_update} outperform all other methods. The projected penalty method also performs well but is not able to resolve solutions to a high accuracy as fast as the smooth methods can. Minimizing $\ft_{\sigmab}$ with a fixed value of $\sigmab$ can solve subsets of the problem rapidly but ultimately cannot resolve all problems well without the update of the $\sigmab$ parameter.}
\label{fig:data_profile}
\end{figure}

The data profiles, \Cref{fig:data_profile}, show that L-BFGS-B and \Cref{alg:sigma_update} (labeled AdaWarp) perform similarly well, solving almost all problems to a high accuracy quickly, while the optimizations of $\ft_{\sigmab}$ with a constant value of $\sigmab$ and the nonsmooth penalty approach (labeled PPM) perform somewhat worse. These results validate both the value of using a smooth domain warping and the improvement in solution resolution gained by updating $\sigmab$. The data profile with the relatively low tolerance $\tau=0.01$ shows that L-BFGS-B and \Cref{alg:sigma_update} solve nearly all problems expediently; PPM also solves almost all problems, although at a slower rate; and optimizations of $\ft_{\sigmab}$ with a constant value of $\sigmab$ solve only a fraction of the problems, although relatively quickly. When considering the data profile for a higher accuracy \Cref{fig:data_profile} (right), relative KKT tolerance of $10^{-4}$, we see that L-BFGS-B and \Cref{alg:sigma_update} can once again solve a variety of problems to a high accuracy quickly; however, %
L-BFGS-B does not solve all the problems in the set to the desired accuracy. The reason is that on a small subset of problems, the NLopt implementation of L-BFGS-B stopped prematurely because of an internal stopping tolerance, measuring the relative difference in function values, which could not be turned off. The performance of PPM drops because nonsmooth optimization problems are difficult to resolve to high orders of accuracy. For data profiles using only subsets of the problem set, see \cite{onlinesupplement}.

\section{Conclusion}
We explored the use of a domain warping to develop an analog of the classical penalty approach that applies to problems with unrelaxable finite bound constraints. The domain warping alleviates the dependence on unrelaxable constraints such that highly specialized unconstrained optimization solvers can be used to generate solutions to the unrelaxable problem. Our formulation enjoys smoothness, is easy to use, and can solve unrelaxable problems reasonably efficiently. This approach is fundamentally different from previous modeling approaches considered because it does not require a penalty term and optimizes over the interior of the domain. The domain warping detailed here applies only to problems with finite bound constraints. We conjecture that analogous warpings exist for decision sets defined by general bounded polyhedron (polytopes) and convex sets, with future research addressing settings where such warpings open up avenues of applying specialized algorithms that may otherwise ignore these types of constraints.  Such approaches are needed in sciences where problems with simple linear unrelaxable constraints, such as $x_i \le x_{i+1}$, naturally appear; examples range from ordering particle accelerator elements \cite{eldred2021RCS} to pandemic alert staging \cite{Yang2021}. We hope researchers extend this work to develop smooth modeling approaches for optimization problems with general nonlinear and convex unrelaxable constraints.

\section*{Acknowledgments}

This work was supported in part by the U.S.~Department of Energy, Office of
Science, Office of Advanced Scientific Computing Research and Office of
High-Energy Physics, Scientific Discovery through Advanced Computing (SciDAC)
Program through the FASTMath Institute and the ComPASS-4 Project under Contract
No.~DE-AC02-06CH11357.
We thank Kamil Khan for his incredible insights into nonsmooth functions and
Gokul Deez Nair for his sharp analytical insights.

\printbibliography

\appendix
\section{Mathematical Background}
\label{sec:background}
We use the Karush--Kuhn--Tucker (KKT) conditions to measure whether a point is indeed a solution, or an approximate solution, to \ref{eq:PROB}.
These measures also allow us to consider to what degree the original and reformulated problem are similar.

The KKT conditions are necessary conditions for a first-order stationary point of a constrained optimization problem. For the bound-constrained optimization problem \ref{eq:PROB} the KKT conditions \cite{nocedal2006no} are as follows.
\begin{theorem}[KKT]
\label{thm:kkt}
If $\yb^*\in\Omegab$ is a solution of {\normalfont\ref{eq:PROB}} under Assumptions {\normalfont\ref{assumption:f_L_smooth}} and {\normalfont\ref{assumption:omega_cube}}, then the KKT conditions are satisfied: There exist Lagrange multipliers $\lambdab,\mub \in \Reals^d$ such that dual feasibility holds $\lambdab,\mub \leq 0$, the stationary condition holds $\nabla f(\yb^*) + \sum_{i=1}^n \lambda_i \eb_i - \sum_{i=1}^n \mu_i \eb_i =0$, and complementary slackness holds $\lambda_i y^*_i = 0,\, \mu_i(1- y^*_i) =0$ for $i=1,\ldots,n$.
\end{theorem}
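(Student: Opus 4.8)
The plan is to prove \Cref{thm:kkt} directly, exploiting the fact that $\Omegab=[0,1]^n$ (\Cref{assumption:omega_cube}) is a box, so that the first-order conditions decouple coordinatewise; because the bound constraints are affine, no constraint qualification machinery is needed. First I would record the elementary first-order necessary condition at a minimizer: if $\yb^*$ solves \ref{eq:PROB}, then $\nabla f(\yb^*)^{\top}\db \ge 0$ for every \emph{feasible direction} $\db$ at $\yb^*$, i.e., every $\db$ for which $\yb^*+t\db\in\Omegab$ for all sufficiently small $t>0$. This follows from differentiability of $f$ (\Cref{assumption:f_L_smooth}): a first-order expansion gives $f(\yb^*+t\db)=f(\yb^*)+t\,\nabla f(\yb^*)^{\top}\db+o(t)$, so $\nabla f(\yb^*)^{\top}\db<0$ would contradict optimality of $\yb^*$.

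Next I would identify the feasible directions of the box explicitly: at $\yb^*$ the coordinate direction $\eb_i$ is feasible whenever $y_i^*<1$, and $-\eb_i$ is feasible whenever $y_i^*>0$. Substituting these into the first-order condition yields, for each $i$, that $\partial_i f(\yb^*)\ge 0$ when $y_i^*<1$ and $\partial_i f(\yb^*)\le 0$ when $y_i^*>0$; in particular $\partial_i f(\yb^*)=0$ whenever $y_i^*\in(0,1)$.

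I would then construct the multipliers coordinatewise to match the sign convention in the statement (that is, $\lambdab,\mub\le\zerob$, stationary condition $\partial_i f(\yb^*)+\lambda_i-\mu_i=0$, and complementary slackness $\lambda_i y_i^*=0$, $\mu_i(1-y_i^*)=0$). If $y_i^*\in(0,1)$, take $\lambda_i=\mu_i=0$; stationarity holds since $\partial_i f(\yb^*)=0$, and both slackness conditions are trivial. If $y_i^*=0$, take $\mu_i=0$ and $\lambda_i=-\partial_i f(\yb^*)$: then $\lambda_i\le 0$ because $\partial_i f(\yb^*)\ge 0$ (only $\eb_i$ is feasible), $\lambda_i y_i^*=0$ because $y_i^*=0$, the condition $\mu_i(1-y_i^*)=0$ is immediate, and the stationary condition is the defining equation for $\lambda_i$. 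The case $y_i^*=1$ is symmetric: take $\lambda_i=0$ and $\mu_i=\partial_i f(\yb^*)\le 0$. Assembling the coordinates gives $\nabla f(\yb^*)+\sum_{i=1}^n\lambda_i\eb_i-\sum_{i=1}^n\mu_i\eb_i=\zerob$ with $\lambdab,\mub\le\zerob$, which is the claim.

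I do not expect a genuine obstacle here: this is a classical result, and the analytic content is just the first-order expansion together with the separable structure of the box. The only point requiring care is bookkeeping the somewhat nonstandard sign convention (both multiplier vectors nonpositive, with a minus sign on $\mub$ in the stationary condition), so that the constructed multipliers land in exactly the asserted sign pattern. As an alternative one could simply invoke the general KKT theorem (e.g.,~\cite{nocedal2006no}) together with the observation that affine constraints automatically satisfy a constraint qualification; but the direct box argument above is self-contained and arguably cleaner for this setting.
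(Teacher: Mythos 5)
Your proof is correct. The paper itself does not prove \Cref{thm:kkt} at all: it records the KKT conditions in the background appendix as a standard fact and simply cites~\cite{nocedal2006no}, so any comparison is between your self-contained argument and a textbook invocation. Your route --- first-order necessary condition $\nabla f(\yb^*)^{\top}\db\ge 0$ over feasible directions of the box, then reading off the sign of $\partial_i f(\yb^*)$ coordinatewise ($\ge 0$ at $y_i^*=0$, $\le 0$ at $y_i^*=1$, $=0$ in the interior) and defining $\lambda_i=-\partial_i f(\yb^*)$ or $\mu_i=\partial_i f(\yb^*)$ accordingly --- is the natural elementary proof for a separable feasible set, and you have matched the paper's slightly nonstandard sign convention (both multiplier vectors nonpositive, minus sign on $\mub$) exactly; the constructed multipliers satisfy dual feasibility, stationarity, and complementary slackness in each of the three cases. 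What the direct argument buys is that no constraint qualification needs to be checked (your feasible-direction condition is itself the necessary condition, and the box structure makes the active-constraint gradients $\pm\eb_i$ trivially independent in any case), at the cost of a page of bookkeeping the paper avoids by citation; your closing remark that one could instead invoke the general KKT theorem is precisely what the paper does. The only cosmetic point worth noting is that it is also consistent with the single-multiplier form the paper uses later in the proof of \Cref{thm:limiting_stationarity}, where $\lambda_i^*$ there corresponds to $\lambda_i-\mu_i$ here.
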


We now define approximate stationary points through a notion of approximate satisfaction of the KKT conditions, which we call $\epsilon$-stationarity.

\begin{definition}[$\epsilon$-Stationary]
\label{def:eps_stationary}
$\yb^*\in\Omegab$ is an $\epsilon$-stationary point for \ref{eq:PROB} under Assumptions \ref{assumption:f_L_smooth} and \ref{assumption:omega_cube} if the following conditions are satisfied: There exist Lagrange multipliers $\lambdab,\mub \in \Reals^n$ such that dual feasibility holds $\lambdab,\mub \leq \zerob$, the stationary condition approximately holds $\abs{\nabla f(\yb^*) + \sum \lambda_i \eb_i - \sum \mu_i \eb_i} \leq \epsilon\oneb$, and complementary slackness approximately holds $\abs{\lambdab \odot \yb^*} \leq \epsilon\oneb,\, \abs{\mub \odot (\oneb-\yb^*)} \le \epsilon\oneb$.
\end{definition}

\begin{definition}[$L$-smooth]
\label{def:L-smooth}
A function $f$ is $L$-smooth if the gradient exists and is Lipschitz continuous, that is,  for any $\xb,\yb \in \Omegab$
$$\|\nabla f(\xb) - \nabla f(\yb)\| \leq L\|\xb-\yb\|.$$
\end{definition}
A useful property of $L$-smooth functions is that they have a quadratic upper bound. 

\begin{property}[Quadratic Upper Bound]
\label{prop:quadratic_upper_bound}
If $f$ is $L$-smooth, then for any $\xb,\yb \in\Omegab$ it satisfies the following inequality~\cite{polyak2021book}:
\begin{equation*}
    f(\yb) \leq f(\xb) + \nabla f(\xb)^T(\yb-\xb) + \frac{L}{2}\|\yb - \xb\|^2 .
\end{equation*}
\end{property}

\section{Convergence and Complexity Proofs}\label{sec:appendix_proofs}

\begin{proof}[Proof of \Cref{thm:sigup_converges}]
We denote the iterates of \Cref{alg:sigma_update} as $\xb_k^*,\yb_k^*$. We use superscripts $m$ to denote the iterates of an instance of gradient descent within iteration $k$. Thus, in iteration $k$ of \Cref{alg:sigma_update}, gradient descent with a constant step size produces the sequence $\xb_k^{m+1} = \xb_k^m - \frac{1}{\tilde{L}_k}\nabla \ft_{\sigmab_k}(\xb_k^m)$ and $\yb_k^m = \Sb(\xb_k^m)$ using $\sigmab = \sigmab_k$.

We first show that at each iteration of \Cref{alg:sigma_update}, gradient descent will stop with $\|\nabla \ft_{\sigmab_k}(\xb_k^m)\|\le \delta$. Any two consecutive iterates $\xb_k^m$ and $\xb_{k}^{m+1}$ produced by gradient descent will satisfy the following sufficient decrease condition
\begin{align}
    \ft_{\sigmab_k}(\xb_k^{m+1}) &\le 
    \ft_{\sigmab_k}(\xb_k^{m}) + \nabla\ft_{\sigmab_k}(\xb_k^m)^T(\xb_k^{m+1} - \xb_k^m) + \frac{\tilde{L}_k}{2}\|\xb_k^{m+1} - \xb_k^m\|^2 \notag
    \\
    &=
    \ft_{\sigmab_k}(\xb_k^{m}) - \frac{1}{2\tilde{L}_k}\|\nabla \ft_{\sigmab_k}(\xb_k^{m})\|^2 
    \label{eq:GD_suff_decrease}
\end{align}
 as a consequence of the quadratic upper bound (see \Cref{prop:quadratic_upper_bound}).
Notice that because $f$ is bounded below on $\Omegab$, $\ft_{\sigmab_k}$ is bounded below (by $\min\{f(\xb): \xb\in \Omegab\}$). As a consequence, $\|\nabla \ft_{\sigmab_k}(\xb_k^{m})\|$ must converge to zero as $m\to\infty$, because otherwise \cref{eq:GD_suff_decrease} would contradict this lower bound. This implies that gradient descent will terminate after finding an iterate satisfying the stopping tolerance $\|\nabla \ft_{\sigmab_k}(\xb_{k}^{N_k})\| \le \delta$ after $N_k$ iterations.

Having shown that gradient descent converges for any iteration of \Cref{alg:sigma_update}, we now show that for sufficiently large $k$ the decrease in objective value achieved over any gradient descent step  depends only on $\sigmab_k$ through the domain warping. To this end we must remove the effect of the Lipschitz constant $\tilde{L}_k$ in \cref{eq:GD_suff_decrease}, which is dependent on $\sigmab_k$. \Cref{thm:lipschitz_consant_bound} expresses the Lipschitz constant as $\tilde{L}_k = \frac{1}{2}(\sigma_{\max,k}^2\hat{L} + \sigma_{\max,k}L)$, where $L,\hat{L}$ are 
constants and 
$\sigma_{\max,k} = \max_j\{\sigma_{k,j}\}$. 
For the remainder of the proof suppose that $k$ is large enough that $\sigmab_k\ge \oneb$. Then we can upper bound the Lipschitz constant via $\tilde{L}_k = \frac{1}{2}(\sigma_{\max,k}^2\hat{L} + \sigma_{\max,k}L) \le\frac{1}{2} \sigma_{\max,k}^2(\hat{L} + L) = \frac{1}{2}\sigma_{\max,k}^2C$ for $C = \hat{L} + L$.

We rewrite the sufficient decrease condition \cref{eq:GD_suff_decrease} in terms of $\yb_k^m$ and uncover the presence of $\sigmab_k$. To do so, we expand the derivative terms, lower bound $\|\sigmab\|$ in terms of $\sigma_{\min,k} =\min_j\{\sigma_{k,j}\}$, and use the Lipschitz constant bound
\begin{align*}
    f(\yb_k^{m+1}) &\le  f(\yb_k^{m}) - \frac{1}{\sigma_{\max,k}^2\hat{L} + \sigma_{\max,k}L }\|\sigmab_k\odot\yb_k^m\odot(\oneb-\yb_k^m)\odot\nabla f(\yb_k^m)\|^2
    \\
    &\le f(\yb_k^{m}) - \frac{\sigma_{\min,k}^2}{\sigma_{\max,k}^2C}\|\yb_k^m\odot(\oneb-\yb_k^m)\odot\nabla f(\yb_k^m)\|^2 .
\end{align*}
With the lower bound of $\kappa$ on $\sigma_{\min,k}/\sigma_{\max,k}$ we arrive at a sufficient decrease condition that is affected by $\sigmab_k$ only through the domain warping:
\begin{align}
    f(\yb_k^{m+1}) &\le f(\yb_k^{m}) - \frac{\kappa^2}{C}\|\yb_k^m\odot(\oneb-\yb_k^m)\odot\nabla f(\yb_k^m)\|^2.
    \label{eq:GD_suff_decrease_sigmaless}
\end{align}
This bound not only holds between iterates within the same step $k$ of \Cref{alg:sigma_update} but also can connect steps of \Cref{alg:sigma_update}. We emphasize that the last iterate (in the original domain) $\yb_k^{N_k} = \Sb(\xb_k^{N_k})$ found by gradient descent will be equal to the iterate $\yb_k^*$ of \Cref{alg:sigma_update} as well as the first iterate of the gradient descent after $\sigmab$ is updated, that is, 
\begin{equation}\label{eq:consecutive_ys}
\yb_k^{N_k} = \yb_k^* = \yb_{k+1}^0. 
\end{equation}
(The same cannot be said for the warped counterparts, because $\xb_k^{N_k}$ and $\xb_{k+1}^0$ are not warped under the same value of $\sigmab$.)
By \cref{eq:consecutive_ys},
\begin{align*}
    f(\yb_{k+1}^{1}) 
    &\le f(\yb_{k+1}^{0}) - \frac{\kappa^2}{ C}\|\yb_{k+1}^{0}\odot(\oneb-\yb_{k+1}^{0})\odot\nabla f(\yb_{k+1}^{0})\|^2
    \\
    &= f(\yb_{k}^{N_k}) - \frac{\kappa^2}{ C}\|\yb_{k}^{N_k}\odot(\oneb-\yb_{k}^{N_k})\odot\nabla f(\yb_{k}^{N_k})\|^2 .
\end{align*}
Because consecutive iterates generated by gradient descent and \Cref{alg:sigma_update} satisfy this quadratic growth condition, we relabel the sequence of points $\{\yb_k^m\}$ as $\{\hat{\yb}_l\}$.
We exclude the points $\yb_k^{N_k}$ from the sequence $\{\hat{\yb}_l\}$ so they do not appear twice (since they are equal to $\yb_{k+1}^0$). Similarly we make the sequences $\{\hat{\xb}_l\}$, $\{\sigmab_l\}$, and $\{\alpha_l\}$. Thus, for sufficiently large $l$ such that $\sigmab_l \ge \oneb$, consecutive iterates of the sequence $\{\hat{\yb}_l\}$ satisfy the quadratic growth condition
\begin{align}\label{eq:term}
    f(\hat{\yb}_{l+1}) &\le f(\hat{\yb}_l) - \frac{\kappa^2}{2C}\|\hat{\yb}_l\odot(\oneb-\hat{\yb}_l)\odot\nabla f(\hat{\yb}_l)\|^2 .
\end{align}
The objective function $f$ is bounded below on $\Omegab$, which implies that the term $\hat{\yb}_l\odot(\oneb-\hat{\yb}_l)\odot\nabla f(\hat{\yb}_l)$ from \cref{eq:term} must converge to zero. Equivalently, for each component $j$, either $\hat{y}_{l,j}$ converges to the boundary or $\partial_j f(\hat{\yb}_l)\to 0$. While this decrease condition implies that the limit points of gradient descent will be stationary for $\ft_{\sigmab}$, it does not imply that the KKT conditions for \ref{eq:PROB} will be satisfied in the limit. For that, we will appeal to properties of the gradient descent step sequence. 

Because $\{\hat\yb_l\}$ is a bounded sequence, the Bolzano--Weierstrass theorem guarantees that there exists a convergent subsequence, $\hat{\yb}_{l_i} \to \zb \in \Omegab$. We will show that any limit point $\zb$ of $\{\hat{\yb}_{l}\}$ must be a KKT point.

If $j$ is a component such that $\partial_j f(\hat{\yb}_{l_i}) \to 0$, then the KKT conditions are satisfied in component $j$ at $\zb$. So, let $j$ be a component such that $\partial_j f(\hat{\yb}_{l_i})$ does not converge to zero. This implies that $\hat\yb_{l_i}$ converges to the boundary, namely, $z_j = 0$ or $1$. Without loss of generality, assume $z_j = 1$. For sake of contradiction assume $\partial_j f(\zb)>0$.

By continuity of $\partial_j f$, there exists a $\beta >0$ such that if $\|\hat\yb_{l_i} - \zb\|_{\infty} \le \beta$, then $\partial_j f(\hat\yb_{l_i}) >0$. Furthermore, by the convergence of $\hat\yb_{l_i}\to\zb$ there exists an $M$ such that for all $l_i> M$, $\|\hat\yb_{l_i} - \zb\|\le \beta$ and consequently $\partial_j f(\hat\yb_{l_i}) >0$. When $\partial_j f(\hat\yb_{l_i}) >0$, the gradient descent step implies that 
\begin{equation*}
    \hat{x}_{l_{i+1},j} < \hat{x}_{l_i,j} - \alpha_{l_i} \sigma_{l_i,j} \hat{y}_{l_i,j}(1-\hat{y}_{l_i,j})\partial_j f(\hat{\yb}_{l_i}) < \hat{x}_{l_i,j} .
\end{equation*}
Monotonicity of the sigmoidal warping ensures that $\hat{y}_{l_{i+1},j} < \hat{y}_{l_i,j}$ and hence that $\hat{y}_{l_{i+1},j}$ is further from $z_j$ than $\hat{y}_{l_{i+1},j}$; that is, $z_j - \hat{y}_{l_{i+1},j} > z_j -\hat{y}_{l_i,j}$.

This process continues inductively so long as $\partial_jf(\hat{\yb}_{l_i})>0$, which holds for all ${l_i}>M$; that is, for any ${l_i}> M$,  $z_j - \hat{y}_{l_{i+1},j} > z_j -\hat{y}_{l_i,j}$. Thus, the distance from $\hat{y}_{l_i,j}$ to $z_j$ is bounded below by $\abs{z_j - \hat{y}_{l_i,j}} > \abs{z_j - \hat{y}_{M,j}}$, which implies that $\{\hat{y}_{l_i,j}\}$ does not converge to $z_j$. This is a contradiction, and so it must hold that $\partial_j f(\zb) \le 0$. 

Now that we have shown that the derivatives at the boundary point $\zb$ have the appropriate sign, the KKT conditions holds: There exists a dual feasible $\lambda_j \geq 0$ such that the stationary condition holds, namely,  $\partial_j f(\zb)+ \lambda_j = 0$, and complementary slackness holds (because $z_j$ lies on the boundary of $\Omegab$). 
\end{proof}

\begin{proof}[Proof of \Cref{thm:sigup_num_iter}]
  To prove this claim, we will show that $\sigmab_k$ increases sufficiently such that the bounds in \Cref{thm:stationary_condition} imply $\epsilon$-stationarity.

As a first case, suppose there exist components $j$ such that the sequence $\{\yb_{k,j}^*\}$ is bounded away from the boundary of $\Omegab$; in other words,  for all $k>0$  the sequence lies in $y_{k,j}^* \in [\nu,1-\nu]$, where $\nu\in (0,1)$. For $k > 0$, \ref{eq:update_rule} sets the parameter $\sigmab_{k}$ to have components $\sigma_{k,j} = \frac{\gamma^{k}}{\prod_{l=0}^{k-1}\sqrt{\eta_{l,j}}}$. Notice that we have assumed here that $\sigmab_{k+1} = \frac{\gamma}{\sqrt{\etab_k}}\odot\sigmab_k$. When iterating to a finite tolerance, $\sigmab_k$ does not need to diverge, and so a value of $\kappa$ always exists such that $\min_{u,v}\{\sigma_{k,u}/\sigma_{k,v}\}> \kappa$ for all $k$.
Since $\eta_{k,j}$ is the minimum distance from $y_{k,j}^*$ to the boundary, it is bounded above $\eta_{k,j}\leq 1/2$ for all $k$. By \Cref{thm:stationary_condition} and using $\abs{\partial_j \ft_{\sigmab}(\yb_k^*)} <\delta$ as guaranteed by \Cref{alg:sigma_update}, the value of the partial derivatives is bounded by
\begin{align*}
    \abs{\partial_j f(\yb_k^*)} &\leq \frac{\delta}{\sigma_{k,j} y_{k,j}^*\left(1-y_{k,j}^*\right)}
    \leq\frac{\delta\prod_{l=0}^{k-1}\sqrt{\eta_{l,j}}}{\gamma^k\nu(1-\nu)}
    \leq\frac{\delta}{(\sqrt{2}\gamma)^k\nu(1-\nu)}.
\end{align*}
This bound ensures that for all $k \ge \log(\frac{\delta}{\epsilon \nu(1-\nu)})/\log(\sqrt{2}\gamma)$ steps, $y_{k}^*$ is $\epsilon$-stationary in component $j$, that is,  $\abs{\partial_j f(\yb^*_{k})} \le \epsilon$. If $\{y^*_{k,j}\}$ is uniformly bounded away from the boundary in all components, then $\{\yb_k^*\}$ will contain an $\epsilon$-stationary point within
\begin{equation*}
    N = \log\left(\frac{\delta}{\epsilon \nu (1-\nu )}\right)/\log(\sqrt{2}\gamma)
\end{equation*}
iterations.

Now consider the set of components $j$ such that the sequence $\{y_{k,j}^*\}$ is not contained within a compact set $[\nu,1-\nu]$ for $\nu\in(0,1)$. As discussed in the proof of \Cref{thm:sigup_converges}, this implies that a subsequence of $\{y_{k,j}^*\}$ converges to a component $z^*_j$of a stationary point such that $z^*_j$ is on the boundary of $\Omegab$. Using \Cref{assumption:same_side} and  \Cref{thm:stationary_condition}, we find that the error in the stationary condition with respect to the component $z^*_j$ with Lagrange multiplier $\lambda^*_j$ is bounded by
\begin{align*}
    \abs{\partial_j f(\yb_k^*) +\lambda_j^*}
&\leq \dfrac{L_j\delta}{\abs{\partial_j f(\yb_k^*)}\sigma_{k,j} \Delta_{k,j}}
=\dfrac{L_j\delta\prod_{l=0}^{k-1}\sqrt{\eta_{l,j}}}{\abs{\partial_j f(\yb_k^*)}\gamma^k \Delta_{k,j}}
\leq \dfrac{L_j\delta}{\abs{\partial_j f(\yb_k^*)}(\sqrt{2}\gamma)^k\xi}.
\end{align*}
This bound ensures that for all $k \ge \log(\frac{L_j\delta}{\xi \epsilon^2})/\log(\sqrt{2}\gamma)$ either $\abs{\partial_j f(\yb_k^*) +\lambda_j^*} \le \epsilon$ or $\abs{\partial_jf(\yb_k^*)}\le \epsilon$. In either case, $\epsilon$-stationarity is satisfied in component $j$.

Moreover, within
\begin{equation*}
    N = \max\left\{\log\left(\frac{\delta}{\epsilon \nu(1-\nu)}\right)/\log(\sqrt{2}\gamma), \log\left(\frac{\overline{L}\delta}{\xi \epsilon^2} \right)/\log(\sqrt{2}\gamma)\right\}
\end{equation*}
iterations, $\{\yb_{k}^*\}$ contains an $\epsilon$-stationary point. If no components of the sequence $\{\yb_k^*\}$ stay uniformly bounded away from the boundary, then the bound simplifies to
\begin{equation*}
    N = \log\left(\frac{\overline{L}\delta}{\xi \epsilon^2} \right)/\log(\sqrt{2}\gamma).
\end{equation*}
\end{proof}

\section{CUTEst Problems}
\label{appendix:cutest_problems}

For testing, we use the set of all CUTEst problems~\cite{Gould2014} with dimension $3 \le n \le 1000$, finite bound constraints (not including equality constraints), and no other additional constraints. The problems are listed in \Cref{table:cutest_problems}.

\begin{table}[h!]
\caption{The 40 bound-constrained CUTEst problems used to create the data profiles in \Cref{fig:data_profile}. The columns show the problem name, problem type (Q for quadratic, S for sum of squares, O for other), problem dimension $n$, and approximate number of active constraints at the local optima. The number of active constraints was determined by running L-BFGS-B to a gradient tolerance of $10^{-6}$ to find a local optimum; then the number of activities was computed as the number of components of the optimum with a distance to a boundary less than $0.1\%$ of the componentwise domain width.}
\label{table:cutest_problems}

\centering
\footnotesize
\begin{tabular}{|c|ccc|}
\hline
 Problem &Type &$n$ &Active Constraints \\
 \hline
BQPGABIM &Q &46 &13\\BQPGASIM &Q &50 &14\\CHEBYQAD &S &100 &0\\DEVGLA1B &S &4 &0\\DEVGLA2B &S &5 &2\\DGOSPEC &O &3 &2\\DIAGIQB &Q &1000 &573\\DIAGIQE &Q &1000 &1000\\DIAGIQT &Q &1000 &7\\DIAGNQB &Q &1000 &735\\DIAGNQE &Q &1000 &676\\DIAGNQT &Q &1000 &496\\DIAGPQB &Q &1000 &0\\DIAGPQE &Q &1000 &0\\DIAGPQT &Q &1000 &0\\FBRAIN2LS &S &4 &1\\GENROSEB &S &500 &494\\HADAMALS &O &380 &19\\HART6 &O &6 &0\\HS110 &S &10 &0\\HS25 &S &3 &1\\HS38 &O &4 &0\\HS45 &O &5 &5\\LEVYMONT &S &100 &0\\LEVYMONT10 &S &10 &0\\LEVYMONT6 &S &3 &0\\LEVYMONT7 &S &4 &0\\LEVYMONT8 &S &5 &0\\LEVYMONT9 &S &8 &0\\MAXLIKA &O &8 &3\\POWELLBC &O &1000 &32\\POWERSUMB &S &4 &0\\PROBPENL &O &500 &0\\QINGB &S &5 &0\\S368 &O &8 &2\\SANTALS &S &21 &0\\SINEALI &O &1000 &0\\SPECAN &S &9 &0\\STRTCHDVB &S &10 &0\\TRIGON1B &S &10 &0\\
\hline
\end{tabular}
\end{table}

A few of these problems (DEVGLA2B, GENROSEB, HS25, HS45, MAXLIKA, POWELLBC) had nominal starting points with one or more components outside or on the bound constraints. In this case, we translated the components that were not interior to the bounds orthogonally into the feasible region by $0.1\%$ of the bound width. The affine mapping $(\ub-\lb)\odot \yb + \lb$ was used to map points on the unit cube to the bound-constrained domains $[\lb,\ub]$. This totals 40 problems, of which we estimate, through the use of a bound-constrained solver, that 18 have solutions on the boundary of the domain. The distribution of problem dimensions is given in \Cref{table:problem_dimensions}.

\begin{table}[h!]
\caption{Distribution of problem dimensions $n$.}
\label{table:problem_dimensions}

\centering
\small
\begin{tabular}{ c|p{.1cm}p{.1cm}p{.1cm}p{.1cm}p{.1cm}p{.1cm}p{.1cm}p{.1cm}p{.1cm}p{.2cm}p{.2cm}p{.2cm}p{.2cm}c } 
 $n$ &3 &4 &5 &6 &8 &9 &10 &21 &50 &100 &380 &500 &1000 & \\ 
 \hline
 Number of problems &3 &5 &4 &1 &3 &1 &5 &1 &1 &2 &1 &2 &11 \\ 
\end{tabular}
\end{table}

\section{Domain Warpings for Other Decision Sets}
\label{appendix:warpings_for_other_sets}
In this appendix we discuss variations of the sigmoidal warping that map onto other decision sets. \Cref{table:domain_warps_other_sets} shows domain warpings $\Mb:\Reals^n\to\Gammab$, similar to the sigmoidal warping, for decision sets $\Gammab$ defined by nonnegativity constraints, simplexes, and the unit cube. For decision sets that are a Cartesian product of the prior, the warping can be defined as a Cartesian product as well. Furthermore, if a decision set $\hat{\Gammab}$ can be defined as a smooth invertible map $\Tb(\yb):\Gammab\to\hat{\Gammab}$, then the domain warping can be defined as $\Phib(\xb) = \Tb(\Mb(\xb))$.

\begin{table}[h!]
\caption{Decisions sets $\Gammab$ and domain warpings $\Mb:\Reals^n\to \Gammab$ from the unconstrained domain to the decision set, with warping parameter $\sigmab\in\Reals^n_{++}$. From left to right the decision sets are nonnegativity constraints, nonempty bounded simplexes parameterized by $\ab\in\Reals^n_{++}$ and $b\in\Reals_{++}$, and the unit cube. The theory from \Cref{sec:sigmoidal_connection} should be extendable to these domain warpings with minor modifications.}
\label{table:domain_warps_other_sets}

\def\arraystretch{2}
\centering
\begin{tabular}{|c|c|c|c|}
\hline
Decision Set & $\yb \geq \zerob$  &$\yb\geq \zerob$, $\ab^T\yb \le b$ &$\zerob\le \yb \le \oneb$
\\
\hline
Warping &$e^{\sigmab\odot\xb}$ &$\frac{be^{\sigmab\odot\xb}}{\oneb+\ab^Te^{\sigmab\xb}}$ &$\frac{\oneb}{\oneb+e^{-\sigmab\odot\xb}}$
\\
\hline
\end{tabular}
\end{table}

\section{Algorithmic Considerations}
\label{app:alg_recs}

We now discuss algorithmic considerations that will develop insights through the structure of $\ft_{\sigmab}$. First, we investigate the role that $\sigmab$ plays in reshaping gradient steps. Next, we describe a steepest-descent method that adapts to the structure of the sigmoidal warping $\Sb$ (steepest-descent \cite[Ch.9]{Boyd2004} refers to methods that take a step in the steepest direction with respect to a not-necessarily Euclidean norm). 

\subsection{Effect of the sigmoidal warping on gradient steps}
\label{sec:sigmoidal_gradient_steps}
Many first-order optimization algorithms find the next iterate by minimizing a quadratic model of the objective, such as
\begin{equation}
    m_k(\xb) = \ft_{\sigmab}(\xb_k) + \nabla \ft_{\sigmab}(\xb_k)^T(\xb-\xb_k) + \frac{1}{2\alpha}\|\xb - \xb_k\|^2.
    \label{eq:quadratic_GD_model}
\end{equation}
The model in \cref{eq:quadratic_GD_model} yields a step along the negative gradient direction via the rule $\xb_{k+1} = \xb_k - \alpha \nabla \ft_{\sigmab}(\xb_k)$.
While this routine may make consistent improvement with respect to $\ft_{\sigmab}$, the corresponding sequence $\yb_k = \Sb(\xb_k)$ may make poor improvement toward the optima $\yb^*$ in the original domain. The sequence $\yb_k$ can be approximated to first order by 
\begin{align*}
    \yb_{k+1} &\approx \yb_k - \alpha\sigmab\odot \yb_k\odot(\oneb-\yb_k)\odot\nabla \ft_{\sigmab}(\xb_k)
    \\
    &= \yb_k - \alpha\sigmab^2\odot \yb_k^2\odot(\oneb-\yb_k)^2\odot\nabla f(\yb_k).
\end{align*}
If the product $\sigma_iy_{k,i}(1-y_{k,i})$ is small for some component $i$, then the component $y_{k+1,i}$ will not change substantially through the iteration, even when $\abs{\partial_i f(\yb_k)}$ is large. This naturally occurs when $y_{k,i}$ approaches the boundary as a byproduct of the sigmoidal warping, which forces all elements $\yb_k$ to be within $\Omegab$. Furthermore, we can see how this affects the improvement in the objective throughout this iteration through, once again, a first-order Taylor expansion:
\begin{align*}
    f(\yb_{k+1}) &\approx f(\yb_k) - \alpha[\sigmab\odot \yb_k\odot(\oneb-\yb_k)\odot\nabla f(\yb_k)]^T\nabla \ft_{\sigmab}(\xb_k)
    \\
    &= f(\yb_k) - \alpha \nabla f(\yb_k)^T\sigmab^2\odot \yb_k^2\odot(\oneb-\yb_k)^2\odot\nabla f(\yb_k).
\end{align*}
This approximation suggests that if a single entry $y_{k,i}$ is near the boundary and another entry $y_{k,j}$ is sufficiently interior and $\sigmab$ is not set appropriately, then the gradient descent step would be inefficient, since the step direction $\db_k = \sigmab\odot \yb_k\odot(\oneb-\yb_k)\odot\nabla f(\yb_k)$ would be nearly orthogonal to $\nabla f(\yb_k)$. Setting $\sigmab$ with the sequence $\sigmab_k = (\yb_k\odot(\oneb-\yb_k))^{-1}$ will optimally align the gradient $\nabla \ft_{\sigmab}(\yb_k)$ with $\nabla f(\yb_k)$ and could improve the convergence rate of the method with respect to $f$. However, particularly when the sequence $\yb_k$ is approaching the boundary, this could significantly reduce the step size $\alpha$, since the step would be in the direction $\nabla f(\yb_k)$, which is likely to be nearly orthogonal to the boundary when near a boundary optima. On the other hand, if $\sigmab$ was a fixed constant, the gradient descent step direction would be naturally rotated to point along the boundary.
In the following section we formalize this intuition and package it into a steepest-descent routine.

\subsection{Steepest descent with sigmoidal norm}
\label{sec:steepest_descent}

\begin{figure}[tbh!]
\includegraphics[scale=0.5]{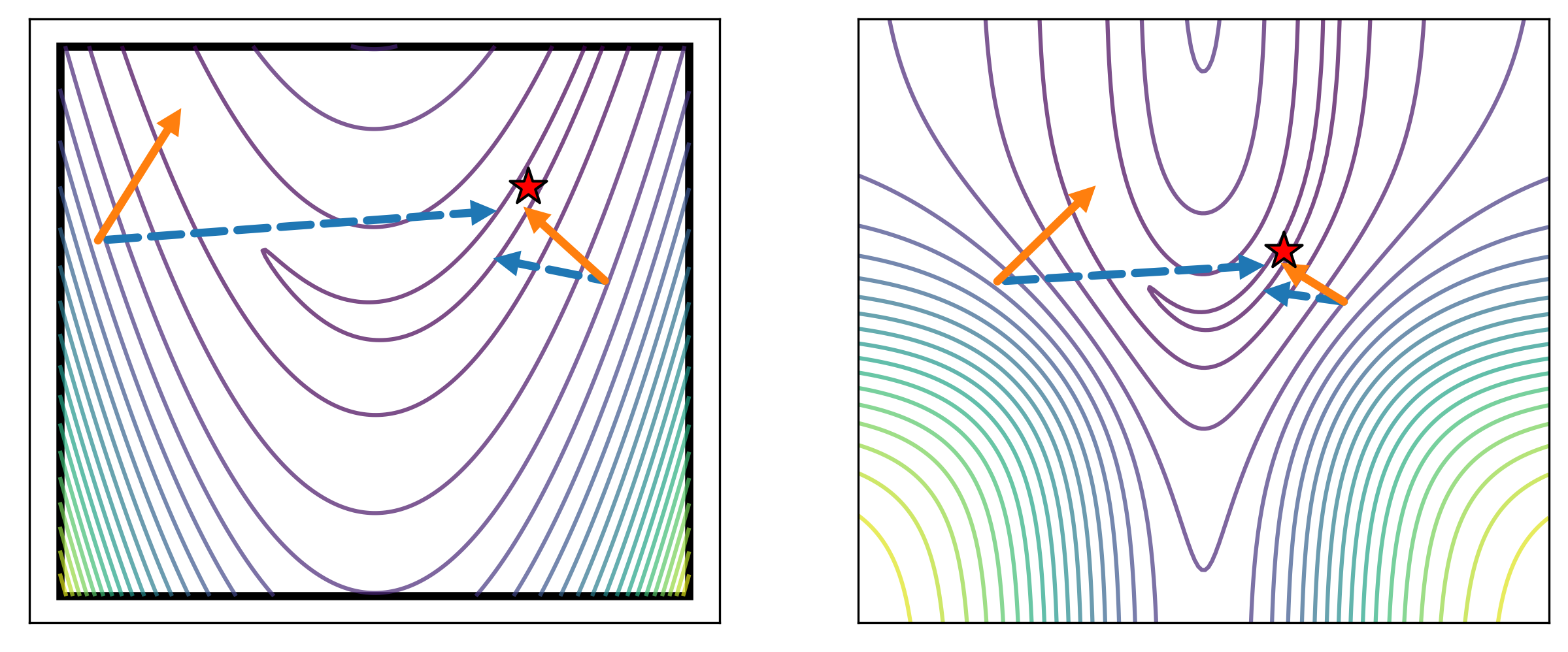}
\centering
\caption{Steepest-descent steps from \cref{eq:steepest_descent_step} (dashed blue arrows) and gradient-descent steps (solid orange arrows) on the merit function $\ft_{\sigmab}$ (right), and the resultant step mapped under the sigmoidal warping $\Sb$ to the constrained domain (left). $f$ is the Rosenbrock function. The steepest descent step from the left point, in both figures, overcomes the warping of the sigmoidal function that occurs near the boundary and takes a more efficient step toward the minima (red star) than the gradient-descent step from the same point. However, taking a steepest-descent step on $\ft_{\sigmab}$ is akin to taking a gradient-descent step on $f$, which may be  inefficient for poorly conditioned functions such as the Rosenbrock function; the gradient-descent step  on $\ft_{\sigmab}$ from the right point, in either figure, is more efficient than the corresponding steepest-descent step. The step lengths were computed by using a line search.}
\label{fig:steepest_descent}
\end{figure}

While the optimization of $\ft_{\sigmab}$ proceeds over points $\xb$ in the unconstrained domain, the success of the optimization is measured in terms of distance between $\Sb(\xb)$ and the optima $\yb^*$. This suggests that the Euclidean distance is not an appropriate distance measure in the unconstrained domain and that a better distance measure should capture the curvature of the manifold generated by $\Sb$. 
For two points $\xb,\xb'\in\Reals^n$ near one another in the unconstrained domain, the distance between $\yb = \Sb(\xb)$ and $\yb' :=\Sb(\xb')$ is approximately
\begin{align*}
    \left\|  \yb' - \yb\right\| &\approx \left\|  \Sb(\xb') - ( \Sb(\xb') +J_{\sigmab}(\xb')(\xb-\xb') )\right\| \\&= \|J_{\sigmab}(\xb')(\xb-\xb')\|
    \\
    &=:\|\xb-\xb'\|_{\sigmab}.
\end{align*}
Since this \say{sigmoidal norm} more accurately captures the distance of interest, we can improve our optimization by using a steepest descent method equipped with the sigmoidal norm $\|\cdot\|_{\sigmab}$ in place of the conventional gradient descent, which relies on the Euclidean norm. We use the terminology steepest descent, similar to \cite{Boyd2004}, to denote a method that takes a step in the steepest direction with respect to a (not-necessarily Euclidean) norm. For a review of steepest descent versus gradient descent see, for example, \cite[Ch 9]{Boyd2004}. Steepest descent with the sigmoidal norm chooses the iterate $\xb_{k+1}$ by minimizing the quadratic model:
\begin{equation*}
    \xb_{k+1} = \arg\min_{\xb}\ft_{\sigmab}(\xb_k) + \nabla \ft_{\sigmab}(\xb_k)^T(\xb-\xb_k) + \frac{1}{2\alpha_k}\|\xb - \xb_k\|_{\sigmab}^2,
\end{equation*}
which yields the following sequence:
\begin{align}
    \xb_{k+1} &= \xb_k - \alpha_k(J_{\sigmab}(\xb_k)J_{\sigmab}(\xb_k))^{-1}\nabla\ft_{\sigmab}(\xb_k) \notag
    \\
    &=\xb_k - \alpha_k\diag(\sigmab\odot\yb_k\odot(\oneb-\yb_k))^{-1}\nabla f(\yb_k).
    \label{eq:steepest_descent_step}
\end{align}
See \Cref{fig:steepest_descent} for a comparison of the steepest-descent step direction with the gradient-descent step. To a first-order approximation, the improvement in $f$ is akin to taking a gradient step in the original domain:
\begin{align*}
    f(\yb_{k+1}) &\approx f(\yb_k) - \alpha \nabla f(\yb_k)^T\nabla f(\yb_k).
\end{align*}
As discussed in \Cref{sec:sigmoidal_gradient_steps}, however, the step direction may be nearly orthogonal to the gradient $\nabla \ft_{\sigmab}$, resulting in poor improvement. We can measure the orthogonality through the inner product $\nabla f(\yb_k)^T\diag(\sigmab\odot\yb_k\odot(\oneb-\yb_k))^{-1}\nabla f(\yb_k)$, which may be near zero when some, but not all, indexes of $\yb_k$ are near the boundary of $\Omegab$. A compromise to make this method practical is to use it as a hybrid method with gradient descent, by taking traditional gradient steps when the steepest-descent step direction is highly orthogonal to $\nabla \ft_{\sigmab}$ and taking steepest descent steps otherwise.

\framebox{\parbox{.90\linewidth}{\scriptsize The submitted manuscript has been created by
        UChicago Argonne, LLC, Operator of Argonne National Laboratory (``Argonne'').
        Argonne, a U.S.\ Department of Energy Office of Science laboratory, is operated
        under Contract No.\ DE-AC02-06CH11357.  The U.S.\ Government retains for itself,
        and others acting on its behalf, a paid-up nonexclusive, irrevocable worldwide
        license in said article to reproduce, prepare derivative works, distribute
        copies to the public, and perform publicly and display publicly, by or on
        behalf of the Government.  The Department of Energy will provide public access
        to these results of federally sponsored research in accordance with the DOE
        Public Access Plan \url{http://energy.gov/downloads/doe-public-access-plan}.}}

\end{document}